\newtheorem{theorem}{Theorem}[section]
\newtheorem{lemma}[theorem]{Lemma}
\newtheorem{cor}[theorem]{Corollary}
\newtheorem{prop}[theorem]{Proposition}
\theoremstyle{definition}
\newtheorem{definition}[theorem]{Definition}
\newtheorem{definitions}[theorem]{Definitions}
\newtheorem{example}[theorem]{Example}
\newtheorem{examples}[theorem]{Examples}
\newtheorem{notn}[theorem]{Notation}
\theoremstyle{remark}
\newtheorem{remark}[theorem]{Remark}
\numberwithin{equation}{section}
\DeclareMathOperator{\ham}{ham}
\DeclareMathOperator{\Pz}{PZ}
\DeclareMathOperator{\curl}{curl}
\DeclareMathOperator{\grad}{grad}
\DeclareMathOperator{\Pspec}{P.Spec}
\DeclareMathOperator{\spec}{Spec}
\DeclareMathOperator{\Jac}{Jac}
\newcommand{\ov}{\overline}
\newcommand{\C}{{\mathbb C}}
\newcommand{\R}{{\mathbb R}}
\newcommand{\N}{{\mathbb N}}
\newcommand{\Q}{{\mathbb Q}}
\begin{document}
\bibliographystyle{amsalpha}
\author{David A. Jordan}

\address
{School of Mathematics and Statistics\\
University of  Sheffield\\
Hicks Building\\
Sheffield S3~7RH\\
UK}

\email{d.a.jordan@sheffield.ac.uk}

\title{
Ore Extensions and Poisson algebras}

\subjclass{Primary 17B63; Secondary 16S36, 13N15, 16W25, 16S80}

\keywords{Poisson algebra, Poisson prime ideal, Ore extension, simple derivation}

\begin{abstract}
For a derivation $\delta$ of a commutative Noetherian $\C$-algebra $A$, a homeomorphism is established between the prime spectrum of the Ore extension $A[z;\delta]$ and the Poisson prime spectrum of the polynomial algebra $A[z]$ endowed with the Poisson bracket such that $\{A,A\}=0$ and
$\{z,a\}=\delta(a)$ for all $a\in A$.
\end{abstract}

\maketitle
\section{Introduction}
The best known example of a simple Poisson algebra is the coordinate ring of the symplectic plane, that is the polynomial algebra $\C[z,x]$ with $\{z,x\}=1$. This corresponds to the best known example of a simple Ore extension $A[z;\delta]$, namely the Weyl algebra
$A_1(\C)$, generated by $x$ and $z$ subject to the relation $zx-xz=1$. Here $A=\C[x]$ and $\delta=d/dx$. The first known example of a Poisson bracket on $\C[x,y,z]$ for which $\C[x,y,z]$ is a simple Poisson algebra, due to Farkas \cite[Example following Lemma 15]{farkas2}, is such that $\{x,y\}=0$
and the hamiltonian $\{z,-\}$ acts on $\C[x,y]$ as the derivation $\delta=\partial_x+(1-xy)\partial_y$, where $\partial_x$ and $\partial_y$ are the partial derivatives. In the first known example, due to Bergman, see \cite{Cout2}, of a derivation $\delta$ for which the Ore extension $\C[x,y][z;\delta]$ is simple the derivation $\delta$ is $\partial_x+(1+xy)\partial_y$. The proofs of simplicity in \cite{farkas2} and \cite{Cout2} both remain valid  for the common generalization where $\delta=\partial_x+(1+\lambda xy)\partial_y$ for some $\lambda\in \C^*$, giving rise to corresponding families of simple Poisson algebras
and simple Ore extensions.
Unlike the case of the symplectic plane and the Weyl algebra, this correspondence does not appear to have been noted. These examples of simple Poisson algebras with corresponding simple Ore extensions are special cases of a general situation. Given any non-zero derivation $\delta$ of a commutative $\C$-algebra $A$, there is a Poisson bracket on the polynomial algebra $A[z]$ such that $\{A,A\}=0$ and
$\{z,a\}=\delta(a)$ for all $a\in A$. We shall show that if $A$ is Noetherian then the Poisson prime spectrum of $A[z]$ is homeomorphic to the prime spectrum of $A[z;\delta]$.
This fits into the philosophy of \cite{goodsemiclass}, in that $A[z]$ is the commutative fibre version of the semiclassical limit of the family
of noncommutative algebras $R_\alpha:=A[h][z;h\delta]/(h-\alpha)A[h][z;h\delta]$, where $\alpha \in\C^*$ and the derivation $\delta$ is extended to the polynomial algebra $A[h]$ by setting $\delta(h)=0$. Note that $R_\alpha\simeq A[z;\alpha\delta]$.

 In addition to Bergman's example, there are many known examples of simple derivations of $\C[x,y]$, for example see \cite{Cout,Cout2,Cout3,havran,dajox,nowetal,nowicki}. All such examples give rise to Poisson brackets for which $\C[x,y,z]$ is a simple Poisson algebra. In \cite{gwpaper1}, Goodearl and Warfield illustrated their study of Krull dimension in Ore extensions with
 some non-simple Ore extensions of $\C[x,y]$ with interesting prime spectra. In the final section we shall transfer these and some other known examples to the Poisson setting  and also answer a question from \cite{gwpaper1} on Ore extensions by constructing an accessible example of a derivation of $\C[x,y]$ giving rise to a Poisson bracket on $B:=\C[x,y,z]$ for which the height two prime ideal $yB+zB$ is Poisson but no height one prime ideal is Poisson.

\section{Background on Poisson algebras}
Our base field will always be $\C$ though the results are valid over any field of characteristic $0$.
In Remark \ref{fds} algebraic closure is pertinent. We denote the prime spectrum of a not-necessarily commutative ring  by $\spec R$.

\begin{definition}
A \emph{Poisson algebra} is $\C$-algebra $A$ with a Poisson bracket, that is a bilinear product
$\{-,-\}:A\times A\rightarrow A$ such that $A$ is a Lie algebra
under $\{-,-\}$ and, for all $a\in A$, the \emph{hamiltonian} $\ham(a):=\{a,-\}$ is a
$\C$-derivation of $A$.
\end{definition}

The following definitions and the claims made for them are well-known. One comprehensive reference is
\cite[Lemma 1.1 and thereabouts]{gooddm}.

\begin{definitions}\label{Delta}
Let $\Delta$ be a set of derivations of a commutative $\C$-algebra $A$. The $\Delta$-\emph{centre},
$Z_\Delta(A)$, of $A$ is $\{a\in A: \delta(a)=0\mbox{ for
all }\delta\in \Delta\}$.

An ideal $I$ of $A$ is a $\Delta$\emph{-ideal} if $\delta(I) \subseteq I$ for all $\delta\in \Delta$ and  a $\Delta$-ideal $P$ of $A$ is $\Delta$-\emph{prime} if, for all $\Delta$-ideals $I$ and $J$ of $A$,
$IJ\subseteq P$ implies $I\subseteq P$ or $J\subseteq P$. If $\Delta=\{\delta\}$ is a singleton then, in these and subsequent definitions, we replace $\Delta$ by $\delta$
rather than
 by $\{\delta\}$.

To say that $A$
is $\Delta$-\emph{simple}
 means that $0$ is the only proper $\Delta$-ideal $I$ of $A$. A derivation $\delta$ of $A$ is said to be {\it simple} if $A$ is $\delta$-simple.

The $\Delta$-\emph{core} of an ideal $I$ of $A$, denoted $(I:\Delta)$, is the largest $\Delta$-ideal of $A$ contained in $I$. If $P$ is a prime ideal of $A$ then
 $(P:\Delta)$ is prime, see \cite[Lemma 1.1(a)]{gooddm}.

If $I$ is a $\Delta$-ideal of $A$
then each derivation $\delta\in \Delta$ induces a derivation $\ov{\delta}$ of $A/I$ such that
$\ov{\delta}(a+I)=\delta(a)+I$ for all $a\in A$. If $I$ is a $\Delta$-ideal and is also prime then
$\ov\delta$ extends to the quotient field $Q(R/I)$ by the quotient rule, $\ov\delta(as^{-1})=(\ov s\ov\delta(\ov a)-\ov a\ov\delta(\ov s))\ov s^{-2}$.

A $\Delta$-ideal $P$ of $A$ is $\Delta$-\emph{primitive} if $P=(M:\Delta)$ for some maximal ideal $M$ of $A$.
Every $\Delta$-primitive ideal is $\Delta$-prime.

If $A$ is a Poisson algebra and $\Delta=\{\ham(b):b\in A\}$ then we replace the prefix $\Delta$- by the word Poisson. In particular an ideal $I$ of a Poisson algebra is a \emph{Poisson ideal} if $\{i,a\}\in I$ for all $a\in A$ and $i\in I$ and  $A$ is a \emph{simple Poisson algebra} if and only if the only Poisson ideals of $A$ are $0$ and $A$. The Poisson centre of $A$ and the Poisson core of a Poisson ideal $I$ of $A$ will be denoted by $\Pz(A)$  and
$\mathcal{P}(I)$ respectively.

\end{definitions}

\begin{lemma}\label{dpp}
Let $A$ be a commutative Noetherian $\C$-algebra and
 let $\Delta$ be a set of derivations of $A$. If $P$ is a
$\Delta$-prime ideal of $A$ then $P$ is prime.
\end{lemma}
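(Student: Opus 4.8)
The plan is to deduce primeness from $\Delta$-primeness by controlling the radical and the minimal primes of $P$, using that the two hypotheses (Noetherian and $\operatorname{char}\C=0$) are precisely what make such control possible. Write $\sqrt{P}$ for the radical of $P$. Since $A$ is Noetherian there are only finitely many minimal primes $P_1,\dots,P_n$ over $P$, with $\sqrt{P}=P_1\cap\cdots\cap P_n$, and the nilradical $\sqrt{P}/P$ of the Noetherian ring $A/P$ is nilpotent, so $(\sqrt{P})^m\subseteq P$ for some $m\ge 1$. These structural facts will be the scaffolding for the argument.

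The first key step, and the one place where $\operatorname{char}\C=0$ is essential, is the Seidenberg-type fact that the radical of a $\Delta$-ideal is again a $\Delta$-ideal. Concretely, for each $\delta\in\Delta$ and each $a$ with $a^k\in P$, repeated application of $\delta$ together with division by the integers produced by the Leibniz rule yields $\delta(a)^{2k-1}\in P$, whence $\delta(a)\in\sqrt{P}$; carrying this out for every $\delta\in\Delta$ shows that $\sqrt{P}$ is a $\Delta$-ideal. (In positive characteristic this step fails, for instance $(x^p)$ is stable under $d/dx$ while its radical $(x)$ is not, which is exactly why the hypothesis is needed.)

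Next I would show that each minimal prime $P_i$ is itself a $\Delta$-ideal, by localizing. Because $P_i$ is minimal over $P$ and the $P_j$ are incomparable, each $P_j$ with $j\neq i$ contains an element outside $P_i$, so $P_jA_{P_i}=A_{P_i}$ and hence $\sqrt{P}\,A_{P_i}=P_iA_{P_i}$ inside the localization $A_{P_i}$. Every $\delta\in\Delta$ extends to $A_{P_i}$ by the quotient rule and, since $\sqrt{P}$ is a $\Delta$-ideal, carries $\sqrt{P}\,A_{P_i}$ into itself and thus stabilizes $P_iA_{P_i}$; contracting back along $A\to A_{P_i}$ gives $\delta(P_i)\subseteq P_iA_{P_i}\cap A=P_i$. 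Hence every $P_i$ is a $\Delta$-ideal.

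Finally I would invoke $\Delta$-primeness. Products of $\Delta$-ideals are again $\Delta$-ideals by the Leibniz rule, and $\prod_i P_i\subseteq\bigcap_i P_i=\sqrt{P}$, so $\big(\prod_{i=1}^n P_i\big)^m\subseteq(\sqrt{P})^m\subseteq P$. This exhibits a product of the $\Delta$-ideals $P_i$ (with multiplicities) inside the $\Delta$-prime ideal $P$; grouping the factors two at a time and applying the definition of $\Delta$-prime repeatedly forces $P_i\subseteq P$ for some $i$. Since $P\subseteq P_i$ by minimality, $P=P_i$ is prime, as required. I expect the Seidenberg radical lemma to be the only substantive input: everything following it is formal bookkeeping with the Noetherian minimal-prime decomposition, flatness of localization, and the definition of $\Delta$-prime.
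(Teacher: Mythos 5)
Your argument is correct, but note that the paper does not actually prove this lemma internally: its ``proof'' is the single line ``See \cite[Lemma 1.1 (d)]{gooddm}'', so what you have written is essentially the standard self-contained argument underlying that citation rather than a different route from an argument in the paper. All three of your steps check out: the Seidenberg-type lemma (with the correct exponent $2k-1$, and the correct use of characteristic $0$ to divide by the integer coefficients produced by the Leibniz rule) shows that $\sqrt{P}$ is a $\Delta$-ideal, and your characteristic-$p$ counterexample correctly isolates why that hypothesis is needed; the localization argument correctly promotes $\Delta$-stability to the finitely many minimal primes $P_i$ over $P$, since $\sqrt{P}A_{P_i}=P_iA_{P_i}$ and $P_iA_{P_i}\cap A=P_i$ because $P_i$ is prime; and the final step legitimately combines nilpotency of the nilradical of the Noetherian ring $A/P$ with induction on the defining property of $\Delta$-primeness (products of $\Delta$-ideals being $\Delta$-ideals) to force $P_i\subseteq P$ and hence $P=P_i$. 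One simplification worth knowing: the localization in your second step can be replaced by a prime-avoidance trick --- given $x\in P_i$, choose $y\in\bigcap_{j\neq i}P_j$ with $y\notin P_i$ (possible since the minimal primes are incomparable); then $xy\in\sqrt{P}$, so for each $\delta\in\Delta$ we have $x\delta(y)+y\delta(x)\in\sqrt{P}\subseteq P_i$, whence $y\delta(x)\in P_i$ and, as $P_i$ is prime and $y\notin P_i$, $\delta(x)\in P_i$. This yields $\Delta$-stability of the minimal primes without extending derivations to $A_{P_i}$, though your route via the quotient rule is also sound.
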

\begin{proof}
See \cite[Lemma 1.1 (d)]{gooddm}.
\end{proof}

\begin{definitions} Let $\Delta$ be a set of derivations of a commutative Noetherian $\C$-algebra $A$.
The $\Delta$-prime spectrum of $A$, denoted $\Delta$-$\spec(A)$, is
the set of all $\Delta$-prime ideals of $A$ with the topology induced from the Zariski topology in $\spec(A)$.
The Poisson spectrum of $A$ will be denoted by $\Pspec(A)$. Thus a closed set in $\Pspec(A)$ has the form $V(I):=\{P\in \Pspec(A):P\supseteq I\}$ for some ideal $I$ of $A$. As is observed in \cite[\S6.1]{goodsemiclass},
replacing $I$ by the Poisson ideal that it generates, $I$ can be assumed to be a Poisson ideal.
\end{definitions}

\begin{definition}Let $A$ be a Poisson algebra and $I$ be a Poisson ideal of $A$. If the induced Poisson bracket on $A/I$ is zero, we say that
$I$ is \emph{residually
null}. This is equivalent to saying that $I$ contains
all elements of
the form $\{a,b\}$ where $a,b\in A$, or that $I$ contains all such
elements where $a,b\in G$ for some generating set $G$ for $A$.
The set of residually null Poisson prime ideals of $A$ is clearly closed in $\Pspec(A)$.
\end{definition}
\begin{definitions}
By a \emph{Poisson maximal ideal} we mean a maximal ideal
that is also Poisson whereas by a \emph{maximal Poisson ideal} we
mean a Poisson ideal that is maximal in the lattice of Poisson
ideals. These notions are not equivalent. Any Poisson maximal ideal is maximal Poisson  but the
converse is false as can be seen by considering the ideal $0$ in any  simple Poisson algebra that is not simple as an associative algebra, such as
$\C[y,z]$ with $\{y,z\}=1$.
\end{definitions}

\begin{definitions}
A G-\emph{domain} is a commutative integral domain $R$ such that the intersection of the non-zero prime ideals is non-zero, in other words $0$ is locally closed in $\spec R$. See \cite[Theorems 19 and 20 and the intermediate text]{kap}.
With $A$ and $\Delta$ as in Definitions~\ref{Delta}, let $P$ be a $\Delta$-prime ideal  of $A$. We shall say that $P$ is
$\Delta$-G if it is locally closed in $\Delta$-$\spec(A)$. To say that $A$ is $\Delta$-G means that $0$ is a $\Delta$-G ideal of $A$.

 If $P$ is a $\Delta$-ideal and prime, in particular if $A$ is Noetherian and $P$ is $\Delta$-prime, we say that $P$ is $\Delta$-\emph{rational} if $Z_{\ov{\Delta}}(Q(A/P))=\C$, where $\ov{\Delta}$ is the set of derivations of the quotient field $Q(A/P)$ induced, via $R/P$, by derivations belonging to $\Delta$.
\end{definitions}

\section{Semiclassical limits of Ore extensions}
\label{poissonore}
Let $A$ denote a commutative
$\C$-algebra that is also a domain and let $D$ be the polynomial algebra $A[h]$. Let $\delta$ be a derivation
of $A$ and extend $\delta$ to $D$ by setting $\delta(h)=0$. Then $h\delta$ is a derivation
of $D$ and we can form the Ore extension (or skew polynomial ring or ring of formal differential operators)
$T:=D[z;h\delta]$ in which elements have the form $\sum_0^n d_iz^i$, $d_i\in D$, and $zd-dz=h\delta(d)$ for all $d\in D$.
Note that $hz=zh$ and $h$ is a central non-unit regular element of $T$ such that $T/hT$ is isomorphic to
the commutative polynomial algebra $B:=A[z]$ and $T/(h-1)T$ is isomorphic to
the Ore extension $R:=A[z;\delta]$. If $\alpha\in \C^*$, then $T/(h-\alpha)T\simeq A[z;\alpha\delta]\simeq
A[z;\delta]$, where the final isomorphism maps $z$ to $\alpha z$. In this situation, there is
a well-defined Poisson bracket on $B$ such that
\[\{\ov u,\ov
v\}=\ov{h^{-1}[u,v]}\] for all $\ov u=u+hT$ and $\ov v=v+hT\in B$.
With this bracket, $B$  is the {\it semiclassical limit} of the family $A[z;\alpha\delta]$, $\alpha\in \C^*$, as in \cite[2.1]{goodsemiclass}, $T$ is a \emph{quantization} of the Poisson algebra $B$ in the sense of
\cite[Chapter III.5]{BGl} and $R$ is a \emph{deformation} of
$B$ in the sense of \cite{dajns}. A familiar example is obtained by taking $A=\C[x]$ and $\delta=d/dx$. Here
$R$ is the Weyl algebra $A_1(\C)$, with generators $x$ and $z$ subject to the relation $zx-xz=1$, and the semiclassical limit $B$ is $\C[x,z]$ with $\{z,x\}=1$, that is the coordinate ring of the symplectic plane.

To emphasise the role of the single derivation $\delta$, the Poisson bracket on $B$ will sometimes be written $\{-,-\}_\delta$. Thus
$\{a,b\}_\delta=0$  and $\{z,a\}_\delta=\delta(a)$ for all
$a,b\in A$. In the terminology of \cite{oh}, $B$ is a Poisson polynomial ring over $A$ for which the
Poisson bracket on $A$ and the derivation $\alpha$ are both zero.

\begin{lemma}\label{QB}
Let $A$ be a commutative $\C$-algebra with a derivation $\delta$ and let $B=A[z]$
equipped with the Poisson bracket $\{-,-\}_\delta$.
\begin{enumerate}
\item
For all $a,b\in A$ and all $m,n\in\N$, $\{az^m,bz^n\}=(ma\delta(b)-nb\delta(a))z^{m+n-1}.$
\item
Let $Q$ be a $\delta$-ideal of $A$. Then $QB$ is a Poisson ideal of $B$ and there is an isomorphism of Poisson algebras,
$\theta_Q:B/QB\rightarrow (A/Q)[z]$ given by \[\theta_Q\left(\left(\sum_{i=0}^n a_iz^i\right)+QB\right)=\sum_{i=0}^n (a_i+Q)z^i,\]
where the Poisson bracket on $A/Q[z]$ is $\{-,-\}_{\overline{\delta}}$.
\end{enumerate}
\end{lemma}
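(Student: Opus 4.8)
The plan is to derive (1) from the single structural fact that, for each $c\in B$, the hamiltonian $\ham(c)=\{c,-\}$ and, by antisymmetry, the map $\{-,c\}$ are $\C$-derivations of $B$; thus the bracket is bilinear and obeys the Leibniz rule in each slot. First I would record the auxiliary formula $\{z^m,b\}=m\delta(b)z^{m-1}$ for $b\in A$. This comes by induction on $m$: expanding $\{b,z^m\}=\{b,z\}z^{m-1}+z\{b,z^{m-1}\}$ via the Leibniz rule in the second argument, and using $\{b,z\}=-\{z,b\}=-\delta(b)$, gives $\{b,z^m\}=-m\delta(b)z^{m-1}$, whence the claim by antisymmetry; one also notes $\{z^m,z^n\}=0$ since $\{z,z\}=0$ forces every bracket of powers of $z$ to vanish. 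Then for (1) I would expand $\{az^m,bz^n\}$ by Leibniz in the first argument as $a\{z^m,bz^n\}+z^m\{a,bz^n\}$, expand each term again in the second argument, and use $\{a,b\}=0$, $\{z^m,z^n\}=0$, the auxiliary formula, and antisymmetry to reduce these to $ma\delta(b)z^{m+n-1}$ and $-nb\delta(a)z^{m+n-1}$. Summing yields the stated expression. The only delicate point is the careful bookkeeping of the integers $m,n$ and of the signs introduced by antisymmetry; there is no conceptual obstacle.

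For (2), the underlying map $\theta_Q$ is just reduction of coefficients modulo $Q$, which is the standard algebra isomorphism $A[z]/QA[z]\simeq (A/Q)[z]$, so it remains only to check that $QB$ is a Poisson ideal and that $\theta_Q$ respects the two brackets. For the first, by bilinearity it suffices to test the bracket on the spanning elements $qz^m$ with $q\in Q$ and $az^n$ with $a\in A$; by part (1) the result is $(mq\delta(a)-na\delta(q))z^{m+n-1}$, whose coefficient lies in $Q$ because $q\in Q$ and, since $Q$ is a $\delta$-ideal, $\delta(q)\in Q$. Hence $QB$ is a Poisson ideal and the bracket descends to $B/QB$.

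Finally, to see that $\theta_Q$ is a Poisson isomorphism I would test on the images of the generators $az^m+QB$ and $bz^n+QB$. The induced bracket on $B/QB$ sends this pair to $(ma\delta(b)-nb\delta(a))z^{m+n-1}+QB$ by part (1), and applying $\theta_Q$ and using $\delta(c)+Q=\ov{\delta}(c+Q)$ turns this into exactly the value that part (1), applied over $A/Q$ with the derivation $\ov{\delta}$, predicts for $\{(a+Q)z^m,(b+Q)z^n\}_{\ov\delta}$. Thus $\theta_Q$ intertwines the two brackets on a generating set, and bilinearity extends this to all of $B/QB$; being an algebra isomorphism, $\theta_Q$ is then an isomorphism of Poisson algebras. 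The whole of (2) is therefore formal once part (1) is in hand, so I expect the inductive computation underlying part (1) to be the only step requiring genuine care.
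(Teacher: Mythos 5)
Your proof is correct and follows exactly the route the paper intends: the paper dismisses (i) as ``routine using the fact that the hamiltonians are derivations,'' and your Leibniz-rule induction supplies precisely those routine details, after which (ii) follows formally from (i) just as the paper asserts. No discrepancy in approach; you have simply written out what the paper leaves to the reader.
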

\begin{proof}
(i) is routine using the fact that the hamiltonians are derivations. The first statement in (ii) is immediate from (i) and the second statement is
straightforward.
\end{proof}

\begin{lemma}
\label{PcapA} Let $A$ be a commutative Noetherian $\C$-algebra that is also a domain, %
 let $\delta$ be a non-zero derivation of $A$ and let $P$ be a
non-zero Poisson prime ideal of $B:=C[z]$ for the Poisson bracket
$\{-,-\}_\delta$. Let $Q=P\cap A$.
\begin{enumerate}
\item
$Q$ is a non-zero $\delta$-prime ideal of $A$.
\item
If $\delta(A)\not\subseteq Q$ then $P=QB$.
\end{enumerate}
\end{lemma}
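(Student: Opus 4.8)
The plan is to establish (i) by an argument on the degree in $z$, and then to derive (ii) by passing to the quotient $B/QB$ and applying (i) once more.

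For (i), I would first observe that $Q=P\cap A$ is prime, being the contraction of the prime ideal $P$ along $A\hookrightarrow B$, and that it is a $\delta$-ideal: if $a\in Q$ then $\delta(a)=\{z,a\}$ lies in $P$ because $P$ is Poisson, and it lies in $A$, so $\delta(a)\in P\cap A=Q$. A prime $\delta$-ideal is automatically $\delta$-prime, which settles $\delta$-primeness. To see that $Q\neq 0$, I would take a non-zero $f=\sum_{i=0}^n a_iz^i\in P$ of least degree $n$ in $z$, with $a_n\neq 0$, and choose $a\in A$ with $\delta(a)\neq 0$, which exists since $\delta\neq 0$. By Lemma~\ref{QB}(i), $\{a,f\}=-\delta(a)\sum_{i=1}^n i\,a_iz^{i-1}$, an element of $P$. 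Were $n\geq 1$, its leading coefficient $n\,a_n\,\delta(a)$ would be non-zero, as $A$ is a domain of characteristic zero, giving a non-zero element of $P$ of degree $n-1$ and contradicting minimality. Hence $n=0$, so $f\in Q$ and $Q\neq 0$.

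For (ii), the inclusion $QB\subseteq P$ is clear, so the task is the reverse inclusion. I would use Lemma~\ref{QB}(ii) to identify $B/QB$ with $(A/Q)[z]$ as Poisson algebras, via $\theta_Q$, the bracket on the target being $\{-,-\}_{\ov\delta}$. Under this identification the Poisson prime ideal $P/QB$ corresponds to a Poisson prime ideal $P'$ of $(A/Q)[z]$, and an element $\ov a$ of $A/Q$ lies in $P'$ exactly when $a\in P\cap A=Q$, that is when $\ov a=0$; so $P'\cap(A/Q)=0$. Since $\delta(A)\not\subseteq Q$, the induced derivation $\ov\delta$ of the Noetherian domain $A/Q$ is non-zero, so the contrapositive of part (i), applied to $\ov\delta$, $(A/Q)[z]$ and $P'$, yields $P'=0$, i.e. $P=QB$.

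I expect the degree-reduction step in (i) to be the crux: bracketing $f$ with an element $a\in A$ on which $\delta$ does not vanish differentiates $f$ in $z$ up to the factor $-\delta(a)$, and so strictly lowers the $z$-degree, with non-vanishing of the new leading coefficient depending on $A$ being a characteristic-zero domain. After that, the reduction of (ii) to (i) via $B/QB$ is routine, the only point needing care being that $\ov\delta\neq 0$, which is exactly the hypothesis $\delta(A)\not\subseteq Q$.
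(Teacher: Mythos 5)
Your proposal follows the paper's own proof almost step for step: the degree-reduction argument via $\{a,-\}$ in (i) and the passage to $(A/Q)[z]$ via $\theta_Q$ in (ii) are exactly what the paper does. The one place you diverge is in how $\delta$-primeness of $Q$ is obtained, and that is where there is a gap relative to the paper's definitions. You assert that $Q$ is prime, ``being the contraction of the prime ideal $P$,'' but in this paper a \emph{Poisson prime} ideal is not by definition a prime ideal: it is a Poisson ideal $P$ such that $IJ\subseteq P$ forces $I\subseteq P$ or $J\subseteq P$ only for \emph{Poisson} ideals $I$ and $J$ (Definitions~\ref{Delta}). That a Poisson prime ideal of a commutative Noetherian algebra is actually prime is Lemma~\ref{dpp}, not a tautology. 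So your step needs the (easy, but unstated) justification that $B=A[z]$ is Noetherian by the Hilbert basis theorem, whence Lemma~\ref{dpp} applies to $B$ and $P$ is prime. With that line added your argument is complete, and it in fact yields slightly more than the paper's, namely that $Q$ is prime outright, which also covers the appeal to Lemma~\ref{dpp} that the paper makes at the start of (ii) to know $A/Q$ is a domain.

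For comparison, the paper gets $\delta$-primeness of $Q$ without ever using primeness of $P$: given $\delta$-ideals $I,J$ of $A$ with $IJ\subseteq Q$, the extensions $IB$ and $JB$ are Poisson ideals of $B$ with $IB\,JB\subseteq P$, so Poisson primeness of $P$ (in the lattice-theoretic sense) gives $IB\subseteq P$ or $JB\subseteq P$, hence $I\subseteq Q$ or $J\subseteq Q$; Lemma~\ref{dpp} is then invoked only for $Q$ inside $A$. Everything else in your proposal checks out: the non-vanishing of the leading coefficient $na_n\delta(a)$ uses precisely that $A$ is a characteristic-zero domain; in (ii), $P/QB$ is Poisson prime in $B/QB$, its image $P'$ satisfies $P'\cap(A/Q)=0$, the hypothesis $\delta(A)\not\subseteq Q$ is exactly $\ov\delta\neq 0$, and the contrapositive of (i) applied to the Noetherian domain $A/Q$ forces $P'=0$.
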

\begin{proof}
(i) Let $p=\sum_{i=0}^n a_iz^i$, with each $a_i\in A$, be an element of
minimal degree $n$ in $z$ among non-zero elements of $P$. Let $a\in A$ be such that $\delta(a)\neq 0$.
Then $(\ham
a)(p)=-\sum_{i=0}^n i\delta(a)a_iz^{i-1}\in P$. This
contradicts the minimality of $n$ unless $n=0$. Thus $n=0$ and
$Q\neq 0$.

As $P$ is a Poisson ideal of $B$, $Q$ is a $\delta$-ideal of $A$. Let $I$ and $J$ be $\delta$-ideals of $A$ such that $IJ\subseteq Q$. By Lemma~\ref{QB}(i), $IB$ and $JB$ are Poisson ideals of $B$.
Clearly $IBJB\subseteq P$ so either $IB\subseteq P$, whence $I\subseteq P\cap A$, or $JB\subseteq P$, whence $J\subseteq P\cap A$. Thus
$P\cap A$ is $\delta$-prime.

(ii) By Lemma~\ref{dpp}, $A/Q$ is a domain. Suppose that $\delta(A)\not\subseteq Q$, so that the induced Poisson bracket on the domain $A/Q$ is non-zero.
Clearly $QB\subseteq P$. If $QB\neq P$ then $\theta_Q(P/BQ)$ is a non-zero Poisson ideal of $(A/Q)[z]$ intersecting $A/Q$ in $0$. This is impossible by (i) applied to $A/Q$ so $QB=P$.
\end{proof}

The situation is similar for the prime spectrum of the Ore extension $R=C[z;\delta]$.
Let $A$ be a commutative $\C$-algebra with a derivation $\delta$ and let $Q$ be a $\delta$-ideal of $A$.
By \cite[\S1, final paragraph]{good},
 $QR$ is an ideal of $R$ and there is an isomorphism
$\psi_Q:R/QR\rightarrow A/Q[z;\overline{\delta}]$ given by \[\psi_Q\left(\left(\sum_{i=0}^n a_iz^i\right)+QR\right)=\sum_{i=0}^n (a_i+Q)z^i.\]

\begin{lemma}
\label{PcapAore} Let $A$ be a commutative $\C$-algebra that is also a domain,
 let $\delta$ be a non-zero derivation of $A$ and let $P$ be a
non-zero  prime ideal of $R:=A[z;\delta]$. Let $Q=P\cap A$.
\begin{enumerate}
\item
$Q$ is a non-zero $\delta$-prime ideal of $A$.
\item
If $\delta(A)\not\subseteq Q$ then $P=QR$.
\end{enumerate}
\end{lemma}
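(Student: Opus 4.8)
The plan is to transcribe the proof of Lemma~\ref{PcapA} into the noncommutative setting, replacing each hamiltonian $\ham(a)=\{a,-\}$ by the inner derivation $[-,a]$ of $R$ and Lemma~\ref{QB}(i) by the commutation rule in $R$. The computation I would record first is that $z^ia=\sum_{k=0}^i\binom{i}{k}\delta^k(a)z^{i-k}$ for $a\in A$, so that for $p=\sum_{i=0}^n a_iz^i$ the commutator $[p,a]:=pa-ap$ lies in $P$ (as $P$ is a two-sided ideal) and has degree at most $n-1$ in $z$, with coefficient of $z^{n-1}$ equal to $na_n\delta(a)$. This is the exact analogue of the identity $(\ham a)(p)=-\sum_{i=0}^n i\delta(a)a_iz^{i-1}$ used in Lemma~\ref{PcapA}, and it drives the whole argument.

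For part (i) I would take $p$ of minimal degree $n$ among the non-zero elements of $P$ and choose $a\in A$ with $\delta(a)\neq 0$. As $A$ is a domain and $a_n\neq 0$, the coefficient $na_n\delta(a)$ is non-zero once $n\geq 1$, so $[p,a]$ is a non-zero element of $P$ of strictly smaller degree, contradicting minimality; hence $n=0$ and $Q=P\cap A\neq 0$. That $Q$ is a $\delta$-ideal is immediate from $[z,q]=\delta(q)$, which forces $\delta(q)\in P\cap A=Q$ for $q\in Q$. For $\delta$-primeness I would take $\delta$-ideals $I,J$ with $IJ\subseteq Q$ and check, using that $I,J$ are $\delta$-stable (so $z(az^j)=az^{j+1}+\delta(a)z^j\in IR$ for $a\in I$), that $IR$ and $JR$ are two-sided ideals of $R$ with $IR\cdot JR\subseteq QR\subseteq P$; primeness of $P$ then yields $IR\subseteq P$ or $JR\subseteq P$, whence $I\subseteq Q$ or $J\subseteq Q$.

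For part (ii) I would reduce modulo $QR$ via the isomorphism $\psi_Q:R/QR\to(A/Q)[z;\ov\delta]$: the image $\psi_Q(P/QR)$ is a prime ideal meeting $A/Q$ in $0$, and $\delta(A)\not\subseteq Q$ says exactly that $\ov\delta\neq 0$, so it suffices to show this prime is zero. Here is the one genuinely new point. In Lemma~\ref{PcapA} one invokes Lemma~\ref{dpp} to know that $A/Q$ is a \emph{domain} and then quotes part (i) for $A/Q$; but Lemma~\ref{dpp} requires $A$ to be Noetherian, which is not assumed here. To rerun the minimal-degree argument over $A/Q$ one needs only that, whenever $\ov a_n\neq 0$, some $\ov a$ has $\ov a_n\ov\delta(\ov a)\neq 0$; and this follows from the $\delta$-primeness of $Q$, since the annihilator $N=\{\ov c:\ov c\,\ov\delta(A/Q)=0\}$ and the ideal $D$ generated by $\ov\delta(A/Q)$ are $\ov\delta$-ideals of the $\ov\delta$-prime ring $A/Q$ with $ND=0$, forcing $N=0$ because $D\neq 0$. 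With $\psi_Q(P/QR)$ thereby shown to meet $A/Q$ non-trivially unless it is zero, we conclude $P=QR$. I expect this adaptation of part (ii), namely replacing the appeal to Lemma~\ref{dpp} by a direct use of $\delta$-primeness, to be the main obstacle; everything else is a faithful, if noncommutative, copy of the proof of Lemma~\ref{PcapA}. (Should one be content to assume $A$ Noetherian, this step collapses to a citation of Lemma~\ref{dpp}.)
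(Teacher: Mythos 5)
Your proof is correct, and for part (ii) it is genuinely better adapted to the stated hypotheses than the paper's own argument. The paper proves (i) purely by citation: $Q$ is $\delta$-prime by a lemma from an earlier paper of the author (\cite[Lemma 1.3]{dajjac}) and $Q\neq 0$ by \cite[Lemma 1]{primore}; your minimal-degree commutator computation and the $IR\cdot JR\subseteq (IJ)R\subseteq QR\subseteq P$ argument are in-line reproofs of exactly those two citations, so (i) matches the paper in substance if not in packaging. For (ii) the paper makes the same reduction you do — pass to $(A/Q)[z;\ov\delta]$ via $\psi_Q$ and apply (i) to $A/Q$ — but to verify the hypotheses of (i) for $A/Q$ it invokes Lemma~\ref{dpp} to conclude that $A/Q$ is a domain. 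You correctly identified the weak point: Lemma~\ref{dpp} is stated for Noetherian $A$, while Lemma~\ref{PcapAore} (unlike its Poisson counterpart Lemma~\ref{PcapA}) carries no Noetherian hypothesis, so the paper's citation is formally outside the scope of the lemma it appeals to. Your replacement — setting $N$ to be the annihilator of $\ov\delta(A/Q)$ and $D$ the ideal it generates, checking both are $\ov\delta$-ideals with $ND=0$, and using $\delta$-primeness of $Q$ (equivalently $\ov\delta$-primeness of $0$ in $A/Q$) to force $N=0$ — is exactly what is needed to run the minimal-degree argument over $A/Q$ without knowing it is a domain, and all the verifications (that $N$ is $\ov\delta$-stable because $\ov c\,\ov\delta^2(\ov a)=0$, that $n\,\ov a_n\ov\delta(\ov a)\neq 0$ in a $\Q$-algebra, that the image of $P$ meets $A/Q$ in zero) go through. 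What the paper's route buys is brevity, and it is harmless in the setting where the lemma is actually used (Theorem~\ref{homeo} assumes $A$ Noetherian); what your route buys is a self-contained proof valid at the stated level of generality, closing a small gap in the paper's own argument.
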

\begin{proof}
(i)  By
\cite[Lemma 1.3]{dajjac}, $Q$ is $\delta$-prime and, by \cite[Lemma 1]{primore}, $Q\neq 0$.

(ii) By Lemma~\ref{dpp} with $\Delta=\{\delta\}$, $A/Q$ is a domain. Suppose that $\delta(A)\nsubseteq Q$ so that the induced derivation $\overline\delta$ on the domain $A/Q$ is non-zero.
The ideal $QR$ is prime by \cite[Lemma 1.3]{dajjac} and $QR\subseteq P$. If $QB\neq P$ then $\phi_Q(P/RQ)$ is a non-zero ideal of $(A/Q)[z;\overline{\delta}]$ intersecting $A/Q$ in $0$. This is impossible by (i) applied to $A/Q$ so $QR=P$.
\end{proof}

\begin{cor}\label{simple}Let $A$ be a commutative $\C$-algebra that is a domain and let
 $\delta$ be a non-zero derivation of $A$. Let $R=A[z;\delta]$ and let $B$ be the Poisson algebra $A[z]$ with the Poisson bracket $\{-,-\}_\delta$. Then
$B$ is Poisson simple if and only if $R$ is simple if and only if $\delta$ is simple.
\end{cor}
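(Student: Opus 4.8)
The plan is to treat $\delta$-simplicity of $A$ as the common hub and to prove the two equivalences ``$B$ Poisson simple $\iff\delta$ simple'' and ``$R$ simple $\iff\delta$ simple'' separately, each by establishing its two contrapositive implications. Throughout I would use that $\delta$ is simple exactly when the only $\delta$-ideals of $A$ are $0$ and $A$, together with the standing hypotheses that $A$ is a domain and $\delta\neq 0$. Since these are the only facts about $A$ that the argument will touch, I expect the whole corollary to drop out of the contraction-and-extension machinery already assembled in Lemmas~\ref{QB}, \ref{PcapA} and \ref{PcapAore}.

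For the two ``not simple $\Rightarrow$ not simple'' directions I would manufacture a non-trivial ideal from a non-zero proper $\delta$-ideal $Q$ of $A$. On the Poisson side, Lemma~\ref{QB}(ii) gives that $QB$ is a Poisson ideal, and as $QB\cap A=Q$ one has $0\neq QB\neq B$, so $B$ is not Poisson simple. On the Ore side, the isomorphism $\psi_Q:R/QR\to (A/Q)[z;\ov\delta]$ recalled before Lemma~\ref{PcapAore} shows $QR$ is a two-sided ideal with $QR\cap A=Q$, whence $0\neq QR\neq R$ and $R$ is not simple. These two steps are routine.

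For the converse ``simple $\Rightarrow$ simple'' directions, the mechanism is that a non-zero (Poisson) ideal must contract to a non-zero $\delta$-ideal of $A$. If $I$ is a non-zero Poisson ideal of $B$ then $I\cap A$ is an ideal of $A$ stable under $\delta$, since for $a\in I\cap A$ we have $\delta(a)=\{z,a\}_\delta\in I\cap A$; for a non-zero ideal $I$ of $R$ the same follows from $\delta(a)=za-az\in I\cap A$. The content of Lemma~\ref{PcapA}(i) and Lemma~\ref{PcapAore}(i) is precisely that this contraction is non-zero, so if $\delta$ is simple then $I\cap A=A$, forcing $1\in I$ and hence $I=B$ (respectively $I=R$); thus the only (Poisson) ideals are the trivial ones.

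The hard part will be the single substantive ingredient, namely $I\cap A\neq 0$, which is the minimal-degree argument already carried out in the proof of Lemma~\ref{PcapA}(i) and cited for Lemma~\ref{PcapAore}(i). Choosing $p=\sum_{i=0}^n a_iz^i\in I$ of least $z$-degree $n$ and an $a\in A$ with $\delta(a)\neq 0$, the element $\{a,p\}_\delta$ (respectively the commutator $pa-ap$) has $z$-degree $n-1$ and leading coefficient a non-zero scalar multiple of $n\,\delta(a)\,a_n$, which is non-zero because $A$ is a domain; minimality then forces $n=0$, so $p\in I\cap A$. I would emphasise that only the domain hypothesis and $\delta\neq 0$ enter here, so no Noetherian assumption is required for Corollary~\ref{simple}, even though Lemma~\ref{PcapA} is stated in the Noetherian setting.
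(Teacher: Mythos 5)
Your proof is correct and follows essentially the same route as the paper: the paper likewise deduces that $\delta$ simple implies $B$ Poisson simple and $R$ simple from the contraction Lemmas~\ref{PcapA} and \ref{PcapAore}, and for the converse extends a non-zero proper $\delta$-ideal $J$ of $A$ to the non-zero proper Poisson ideal $JB$ (Lemma~\ref{QB}(ii)) and the non-zero proper ideal $JR$ (citing \cite[Lemma 1.3]{dajjac}, where you instead invoke the isomorphism $\psi_J$). Your one refinement---re-running the minimal-degree contraction argument on an arbitrary non-zero (Poisson) ideal rather than quoting the lemma statements, which cover only prime ideals and, in the Poisson case, carry a Noetherian hypothesis---is precisely what is needed to make the paper's terse citation rigorous, and your closing observation that no Noetherian assumption is required for the corollary is correct.
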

\begin{proof}
It follows from Lemmas~\ref{PcapA} and \ref{PcapAore} that if $\delta$ is simple then $B$ is Poisson simple and $R$ is simple. On the other hand, if $J$ is a non-zero $\delta$-ideal of $A$ then $JR$ is a non-zero proper ideal of
$R$, by \cite[Lemma 1.3]{dajjac}, and $JB$ is a non-zero proper Poisson ideal of
$B$ by Lemma~\ref{QB}(ii).
\end{proof}

We now aim to generalize Corollary~\ref{simple} to establish a homeomorphism between $\spec R$ and $\Pspec B$. On each side, we shall partition the spectrum into two types of prime ideal.
\begin{notn}\label{twotypes} Let $A$ be a commutative $\C$-algebra and domain with a non-zero derivation $\delta$, let $R=A[z;\delta]$ and let $B=A[z]$
equipped with the Poisson bracket $\{-,-\}_\delta$.
Let $J=\delta(A)A$, which is a $\delta$-ideal of $A$, and let $S=(A/J)[z]$. Then
\begin{enumerate}
\item $JB=\{B,B\}B$ is a residually null Poisson ideal of $B$ and is contained in all residually null Poisson ideals of $B$.
\item  $\theta_J:B/JB\rightarrow S$ is an isomorphism of $\C$-algebras. The Poisson brackets are both $0$.
\item $JR$ is an ideal of $R$ such that $R/JR$ is commutative and $JR$ is contained in all ideals $I$ of $R$
such that $R/I$ is commutative.
\item $\psi_R:R/JR\rightarrow S$ is an isomorphism of commutative $\C$-algebras. The induced derivation $\overline{\delta}$ on $A/J$ is $0$.
\end{enumerate}
Let \[\Pspec_1(B)=\{P\in \Pspec B: P\text{ is residually null}\}=\{P\in \Pspec B:JB\subseteq P\}\] and let
$\Pspec_2(B)=\Pspec B\backslash\Pspec_1(B)$. By analogy, let \[\spec_1(R)=\{P\in \spec R: R/P\text{ is commutative}\}=\{P\in \spec R:JR\subseteq P\}\] and let
$\spec_2(R)=\spec R\backslash\spec_1(R)$.
Note that $\Pspec_1(B)$ and
$\spec_1(R)$ are closed in $\Pspec B$ and $\spec R$ respectively.
Also $\Pspec_1(B)$ is homeomorphic to $\spec(B/JB)$ and $\spec_1(R)$ is homeomorphic to $\spec(R/JR)$.

Let $\kappa$ be the isomorphism $\psi_J^{-1}\theta_J:B/JB\rightarrow R/JR$. Thus \[\kappa\left(\left(\sum_{i=0}^n a_iz^i\right)+JB\right)=
\left(\sum_{i=0}^n a_iz^i\right)+JR.\]
Then $\kappa$ induces a homeomorphism between $\spec(R/JR)$ and $\spec(B/JB)$ and there is a homeomorphism $\Gamma_1:\Pspec_1(B)\rightarrow \spec_1(R)$ such that $\Gamma_1(P)/JR=\psi(P/JB)$ for all
$P\in \Pspec_1(B)$.
\end{notn}

\begin{theorem}\label{homeo}Let $A$ be a Noetherian $\C$-algebra that is a domain and let
 $\delta$ be a non-zero derivation of $A$. Let $R=A[z;\delta]$ and let $B$ be the Poisson algebra $A[z]$ with the Poisson bracket $\{-,-\}_{\delta}$.
There is a homeomorphism between $\spec R$ and $\Pspec B$.
\end{theorem}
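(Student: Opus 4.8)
The plan is to combine the homeomorphism $\Gamma_1\colon\Pspec_1(B)\to\spec_1(R)$ already produced in Notation~\ref{twotypes} with a matching map on the second stratum, and then to promote the resulting inclusion-preserving bijection to a homeomorphism using the Noetherian structure of both spectra.

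First I would construct $\Gamma_2\colon\Pspec_2(B)\to\spec_2(R)$. If $P\in\Pspec_2(B)$ then $P$ is not residually null, so $JB\nsubseteq P$ and hence $\delta(A)\nsubseteq Q$, where $Q=P\cap A$. By Lemma~\ref{PcapA}, $Q$ is a $\delta$-prime ideal of $A$ with $\delta(A)\nsubseteq Q$ and $P=QB$ (the case $P=0$ being trivial). Conversely, for any $\delta$-prime $Q$ with $\delta(A)\nsubseteq Q$, Lemma~\ref{QB}(ii) identifies $B/QB$ with $(A/Q)[z]$, a domain, so $QB$ is Poisson prime and lies in $\Pspec_2(B)$; moreover $QB\cap A=Q$. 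Thus $P\mapsto P\cap A$ is a bijection from $\Pspec_2(B)$ onto the set $X$ of $\delta$-prime ideals $Q$ of $A$ with $\delta(A)\nsubseteq Q$, with inverse $Q\mapsto QB$. By Lemma~\ref{PcapAore} together with the isomorphism $\psi_Q\colon R/QR\to (A/Q)[z;\overline{\delta}]$, the same $X$ is in bijection with $\spec_2(R)$ via $Q\mapsto QR$. Composing, I define $\Gamma_2(P)=(P\cap A)R$, a bijection $\Pspec_2(B)\to\spec_2(R)$ inverting contraction on each side.

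Next I would glue $\Gamma_1$ and $\Gamma_2$ into a single bijection $\Gamma\colon\Pspec B\to\spec R$ and check that $\Gamma$ and $\Gamma^{-1}$ are inclusion-preserving. Within each stratum this is immediate: on $\Pspec_1(B)$ it is the content of $\Gamma_1$ being induced by the ring isomorphism $\kappa$, and on $\Pspec_2(B)$ one has $QB\subseteq Q'B\iff Q\subseteq Q'\iff QR\subseteq Q'R$. The crux is the cross-stratum comparison. A type-one prime contains $JB$ while a type-two prime does not, so no type-one prime can be contained in a type-two prime (and likewise on the $R$ side), which disposes of one direction. For the other, given $P_2\in\Pspec_2(B)$ and $P_1\in\Pspec_1(B)$, both $P_2\subseteq P_1$ and $\Gamma(P_2)\subseteq\Gamma(P_1)$ reduce, by contracting to $A$ and using $QB\cap A=Q=QR\cap A$ together with $\Gamma_1(P_1)\cap A=P_1\cap A$ (which holds because $\kappa$ is the identity on the image of $A/J$), to the single condition $P_2\cap A\subseteq P_1\cap A$ in $A$. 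Hence $\Gamma$ is an isomorphism of the posets $(\Pspec B,\subseteq)$ and $(\spec R,\subseteq)$.

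Finally I would upgrade this order isomorphism to a homeomorphism. Since $A$ is Noetherian, both $B=A[z]$ and $R=A[z;\delta]$ are Noetherian, so $\spec R$ and the subspace $\Pspec B\subseteq\spec B$ are Noetherian spaces. In $\spec R$ every closed set $V(I)$ is the union of the finitely many primes minimal over $I$, that is, a finite union of closures $V(P)=\{P'\in\spec R:P'\supseteq P\}$ of points. The same holds in $\Pspec B$: for a Poisson ideal $I$ the minimal primes over $I$ are finite in number and, in characteristic zero, are themselves Poisson, so $V(I)\cap\Pspec B$ is a finite union of sets $\{P'\in\Pspec B:P'\supseteq P\}$. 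Thus in both spaces the closed sets are exactly the finite unions of principal up-sets for inclusion; since $\Gamma$ is an order isomorphism it carries such finite unions to such finite unions in both directions, and is therefore a homeomorphism. The main obstacle is precisely this last step: an inclusion-preserving bijection need not be continuous across the open/closed decomposition $\Pspec B=\Pspec_2(B)\sqcup\Pspec_1(B)$, and it is the Noetherian description of closed sets as finite unions of closures of points---hence determined entirely by the specialization order---that makes the gluing legitimate.
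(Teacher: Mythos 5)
Your proof is correct and follows essentially the same route as the paper: the same stratification of $\Pspec B$ and $\spec R$, the same maps $\Gamma_1$ and $\Gamma_2$, and the same verification that the glued bijection $\Gamma$ is an order isomorphism (your contraction-to-$A$ argument for the cross-stratum inclusions is equivalent to the paper's computation with $\kappa$). The only divergence is the final step, where the paper simply cites \cite[Lemma 9.4]{goodsemiclass} to convert the order isomorphism into a homeomorphism, while you prove that fact directly from the Noetherian description of closed sets as finite unions of closures of points (using that, in characteristic zero, minimal primes over a Poisson ideal are Poisson) --- a legitimate, self-contained substitute for the citation.
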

\begin{proof}
We have seen in \ref{twotypes} that there is a homeomorphism $\Gamma_1:\Pspec_1(B)\rightarrow \spec_1(R)$ such that $\Gamma_1(P)/JR=\kappa(P/JB)$ for all
$P\in \Pspec_1(B)$. We aim to extend this to a homeomorphism $\Gamma:\Pspec(B)\rightarrow \spec(R)$.

By Lemma \ref{PcapA},
every element of $\Pspec_2 B$ has the form $QB$ for a $\delta$-prime ideal $Q$ of $A$ such that $J\not\subseteq Q$
and, by Lemma \ref{PcapAore},
every element of $\spec_2 R$ has the form $QR$ for such an ideal $Q$. Define $\Gamma_2:\Pspec_2 B\rightarrow \spec_2 R$ by setting $\Gamma_2(QB)=QR$. Then $\Gamma_2$ is bijective and $\Gamma_2$ and $\Gamma_2^{-1}$ preserve inclusions. Combine $\Gamma_1$ and $\Gamma_2$ by defining a bijection $\Gamma:\Pspec B\rightarrow \spec R$ by
\[\Gamma(P)=\begin{cases}\Gamma_1(P)\text{ if }P\in \Pspec_1(B),\\ \Gamma_2(P)\text{ if }P\in \Pspec_2(B)
\end{cases}.\]

Inclusions within $\Pspec_1 B$ and $\spec_1R$ and within $\Pspec_2 B$ and $\spec_2R$ are preserved by $\Gamma$ and $\Gamma^{-1}$.  There are no inclusions $P^\prime\subseteq P$ with $P^\prime\in \Pspec_1 B$ and $P\in \Pspec_2 B$ or
with $P^\prime\in \spec_1 R$ and $P\in \spec_2 R$. Let $P^\prime=QB\in \Pspec_2 B$ and  $P\in \Pspec_1 B$.
Then \begin{eqnarray*}
QB\subseteq P&\Leftrightarrow&\frac{QB+JB}{JB}\subseteq\frac{P}{JB}\\
&\Leftrightarrow&\kappa\left(\frac{QB+JB}{JB}\right)\subseteq\kappa\left(\frac{P}{JB}\right)=\frac{\Gamma(P)}{JR}\\
&\Leftrightarrow&\frac{QR+JR}{JR}\subseteq\frac{\Gamma(P)}{JR}\\
&\Leftrightarrow&\Gamma(QB)=QR\subseteq\Gamma(P).
\end{eqnarray*}
Thus both $\Gamma$ and $\Gamma^{-1}$ preserve inclusions. By \cite[Lemma 9.4]{goodsemiclass}, $\Gamma$ is a homeomorphism.
\end{proof}

\begin{remark}\label{fds}
For many affine algebras, particularly enveloping algebras and quantum algebras, there are prime ideals that are not completely prime and there is an established homeomorphism
between the completely prime part of the spectrum of a deformation and the Poisson prime spectrum of a corresponding semiclassical limit. Some such algebras are discussed in \cite{dajfdsPm}, where a common theme is that the incompletely prime
ideals are annihilators of finite-dimensional simple modules of dimension $d>1$ and it is such a module, rather than its annihilator, that is reflected on the Poisson side, through a $d$-dimensional simple Poisson module. In the context of this paper, this issue is not present on either side. On the Ore side, Sigurdsson \cite{sig} shows that all prime ideals of $A[z;\delta]$ are completely prime. On the Poisson side, by \cite[Theorem 1]{dajfdsPm}, a $d$-dimensional simple Poisson module
over an affine Poisson algebra corresponds to a $d$-dimensional simple Lie module for the Lie algebra $M/M^2$ for some maximal Poisson ideal $M$.
In the context of the present paper, $M/M^2$ is always soluble and, by \cite[Corollary 1.3.13]{dix}, every finite-dimensional simple Lie module for  $M/M^2$ has dimension one.
\end{remark}

\section{Primitivity}
The purpose of this section is to show that, for a commutative affine domain $A$ with derivation $\delta$, the Ore extension  $A[z;\delta]$ is primitive if and only if $A[z]$ is Poisson primitive, for the Poisson bracket $\{-,-\}_\delta$,  and that, under the homeomorphism $\Gamma$ of Theorem~\ref{homeo}, Poisson primitive ideals of $A[z]$ correspond to primitive ideals of $A[z;\delta]$.

It follows from \cite[Theorems 1,2]{primore}, where $A$ is not necessarily affine, that if $\delta\neq 0$ and $A$ is either $\delta$-primitive or $\delta$-G then $A[z;\delta]$ is primitive. The converse, in the Noetherian case, was established in \cite[Theorem 3.7]{gwpaper2}. The logical independence, in the general case, of the two conditions, $\delta$-primitive and $\delta$-G, was shown in \cite{primore} by means of the non-affine examples $A=\C[[y]]$ with $\delta=y\d/dy$, which is $\delta$-G but not $\delta$-primitive, and $A=\C(t)[y]$ with $\delta=t\d/dt+y\d/dy$, which is $\delta$-primitive but not $\delta$-G.
If $A$ is affine and $\delta$-G
then $A$ is $\delta$-primitive by \cite[Proposition 1.2]{gooddm}. It would be interesting to know whether there is an affine $\delta$-primitive $\C$-algebra $A$ which is not $\delta$-G. Such an example
would have consequences for the Poisson Dixmier-Moeglin equivalence as it would give rise to a Poisson bracket on $A[z]$ for which $0$ is Poisson primitive, and hence Poisson rational, but not locally closed.

In the Poisson setting we have analogues of \cite[Theorems 1,2]{primore}.
\begin{theorem}\label{prim} Let $\delta$ be a non-zero derivation of a commutative $\C$-algebra $A$. Then $A[z]$, with the Poisson bracket $\{-,-\}_\delta$, is Poisson primitive if $A$ is $\delta$-primitive or $\delta$-G.
\end{theorem}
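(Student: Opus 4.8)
The plan is to prove, in each of the two cases separately, that $0$ is a Poisson primitive ideal of $B$ by exhibiting a maximal ideal $M'$ of $B$ with $\mathcal{P}(M')=0$. Everything rests on one reduction: it suffices to find $M'$ with $\mathcal{P}(M')\cap A=0$. Indeed, since $M'$ is prime, its Poisson core $P:=\mathcal{P}(M')$ is prime (Definitions~\ref{Delta}), so $B/P$ is a domain; if moreover $P\cap A=0$, then $A$ embeds in $B/P$ and is therefore itself a domain, and the minimal-degree argument of Lemma~\ref{PcapA}(i) applies verbatim. Concretely, pick $a\in A$ with $\delta(a)\neq0$ and, assuming $P\neq0$, a nonzero $p=\sum_{i=0}^n a_iz^i\in P$ of least $z$-degree $n$; then $\ham(a)(p)=-\delta(a)\sum_{i\ge1}ia_iz^{i-1}\in P$ has $z$-degree $<n$ and leading coefficient $-n\delta(a)a_n\neq0$ in the domain $A$, contradicting minimality unless $n=0$, in which case $p\in P\cap A=0$. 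Hence $P=0$. Note that $P\cap A$ is always a $\delta$-ideal (because $\ham(z)$ restricts to $\delta$ on $A$) and is prime (as the contraction of the prime $P$), so it is a $\delta$-prime ideal of $A$ lying inside $M'\cap A$.

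For the $\delta$-primitive case I would fix a maximal ideal $M$ of $A$ with $(M:\delta)=0$ and take any maximal ideal $M'$ of $B$ containing $MB$; the choice $M'=MB+zB$ is convenient, since $B/(MB+zB)\cong A/M$ is a field. Then $M'\cap A=M$, so $\mathcal{P}(M')\cap A$ is a $\delta$-ideal contained in $M$, hence contained in $(M:\delta)=0$, and the reduction gives $\mathcal{P}(M')=0$. This case is essentially self-certifying: the domain property of $A$ is not assumed but drops out of the embedding $A\hookrightarrow B/\mathcal{P}(M')$.

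For the $\delta$-G case, local closedness of $0$ in $\delta$-$\spec(A)$ supplies a nonzero $c\in A$ lying in every nonzero $\delta$-prime ideal of $A$, so that $0$ is the only $\delta$-prime not containing $c$. Now the target is a maximal ideal $M'$ of $B$ with $c\notin M'$: for such $M'$ the $\delta$-prime ideal $\mathcal{P}(M')\cap A$ does not contain $c$ and so must be $0$, after which the reduction again forces $\mathcal{P}(M')=0$. A maximal ideal of $B$ avoiding $c$ exists precisely when $c$ lies outside the Jacobson radical of $B=A[z]$; and since $J(A[z])=\mathrm{nil}(A)[z]$ for any commutative $A$, it suffices that $c$ be non-nilpotent. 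Geometrically this is natural: in $A_c=A[c^{-1}]$ the only $\delta$-prime is $0$, reflecting that $c$ was chosen to meet every nonzero $\delta$-prime of $A$.

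I expect the $\delta$-G case to be the main obstacle, and within it the delicate point is the non-nilpotency of the distinguished element $c$. The hypothesis only guarantees a nonzero $c$ in every nonzero $\delta$-prime, whereas to run the Jacobson-radical step I must know that this intersection is not contained in $\mathrm{nil}(A)$; the honest content of the proof is therefore to produce a \emph{non-nilpotent} such $c$, equivalently to rule out that the $\delta$-G hypothesis is realised only by a purely nilpotent ``heart''. Once a non-nilpotent $c$ is in hand, the remaining steps—choosing a maximal ideal of $B$ missing $c$ and invoking the minimal-degree reduction of the first paragraph—are routine, exactly parallel to the skew-polynomial counterparts of \cite{primore}. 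By contrast, the $\delta$-primitive case requires none of this care.
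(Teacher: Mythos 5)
Your proof follows the same route as the paper's: in each case you exhibit a maximal ideal $M'$ of $B=A[z]$ with $\mathcal{P}(M')\cap A=0$ and then use the minimal-$z$-degree argument of Lemma~\ref{PcapA}(i) to force $\mathcal{P}(M')=0$. The $\delta$-primitive half is exactly the paper's argument (the paper takes an arbitrary maximal ideal of $B$ lying over $M$ where you take the specific choice $MB+zB$; nothing changes), and your $\delta$-G half is the contrapositive of the paper's: the paper supposes $B$ is not Poisson primitive, concludes that the non-zero ideal $I=\bigcap\{\text{non-zero $\delta$-prime ideals of $A$}\}$ lies in $\mathcal{P}(M)\subseteq M$ for every maximal ideal $M$ of $B$, and derives a contradiction from $\Jac(A[z])=0$; you run the same computation forwards by choosing one maximal ideal avoiding a fixed non-zero $c\in I$.

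The point you single out as the remaining obstacle --- producing a \emph{non-nilpotent} $c$ --- is not actually an obstacle, and it is the only place where your write-up stops short of a complete proof. The bracket $\{-,-\}_\delta$ is introduced in Section~\ref{poissonore} under the standing assumption that $A$ is a domain, and the paper's proof of Theorem~\ref{prim} invokes exactly this hypothesis: since $A$ is a domain, $\Jac(A[z])=0$ (the paper cites \cite[Theorem 4]{jacobsonstructure}; your identity $\Jac(A[z])=\mathrm{nil}(A)[z]$ gives the same conclusion). So every non-zero $c\in I$ is automatically non-nilpotent, your Jacobson-radical step goes through verbatim, and no further idea is needed; the ``nilpotent heart'' scenario you worry about cannot occur in the setting the theorem is stated in. Two minor remarks to your credit: your reduction is slightly more self-contained than a bare citation of Lemma~\ref{PcapA}(i), since it notes that the domain property of $A$ needed for the degree argument already follows from $\mathcal{P}(M')\cap A=0$ (so the $\delta$-primitive half of your argument is valid without the standing domain hypothesis); and your observation that $\mathcal{P}(M')\cap A$ is $\delta$-prime because it is a $\delta$-ideal that is prime as the contraction of a prime is a correct substitute for the paper's appeal to Lemma~\ref{PcapA}(i) at that step.
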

\begin{proof}
Suppose that $A$ is $\delta$-G and let $I\neq 0$ be the intersection of the non-zero $\delta$-prime ideals of $A$. As $A$ is a domain, the Jacobson radical $\Jac(A[z])=0$, for example by \cite[Theorem 4]{jacobsonstructure}. Suppose that $A[z]$ is not Poisson primitive and let $M$ be a maximal ideal of $A[z]$.  Then $\mathcal{P}(M)\neq 0$ and, by Lemma~\ref{PcapA}(i), $\mathcal{P}(M)\cap A$ is a non-zero $\delta$-prime ideal of $A$. Therefore $I\subseteq M$ for all maximal ideals $M$ of $A[z]$ so $I\subseteq \Jac(A[z])=0$.
This contradiction shows that $A[z]$ is Poisson primitive.

Suppose that $A$ is $\delta$-primitive and let $M$ be a maximal ideal of $A$ containing no non-zero $\delta$-ideal of $A$. Let $N$ be any maximal ideal of $A[z]$ containing $M$. Thus $N\cap A=M$. Let $P=\mathcal{P}(N)$. Then
$P=0$ otherwise, by Lemma~\ref{PcapA}, $P\cap A$ is a non-zero $\delta$-ideal of $A$ contained in $M$. Thus
$A[z]$ is Poisson primitive.
\end{proof}
It would be interesting to know whether the converse is true in the Noetherian case. As the next result shows, it is true in the affine case.
\begin{theorem}\label{affprim} Let $\delta$ be a non-zero derivation of a commutative affine $\C$-algebra $A$. Then $A[z]$ is Poisson primitive if and only if $A$ is $\delta$-primitive.
\end{theorem}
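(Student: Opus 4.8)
The forward implication is already in hand: Theorem~\ref{prim} gives that $A[z]$ is Poisson primitive whenever $A$ is $\delta$-primitive. So the plan is to prove only the converse, namely that if $A[z]$ is Poisson primitive then $A$ is $\delta$-primitive. My strategy is to manufacture, from a maximal ideal of $B:=A[z]$ witnessing Poisson primitivity, a maximal ideal of $A$ whose $\delta$-core vanishes. Concretely, I would start by choosing a maximal ideal $N$ of $B$ with $\mathcal{P}(N)=0$ (this is what Poisson primitivity of $A[z]$ means, as used in the proof of Theorem~\ref{prim}), and set $M:=N\cap A$.

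The first real step is to argue that $M$ is a \emph{maximal} ideal of $A$, and this is exactly where affineness is used. Since $A$ is affine, so is $B$, so by the Nullstellensatz (Zariski's lemma) $B/N$ is a finite field extension of $\C$, hence equal to $\C$ as $\C$ is algebraically closed; the induced embedding $A/M\hookrightarrow B/N=\C$ then forces $A/M=\C$, so $M$ is maximal. The second step is to show $(M:\delta)=0$. The $\delta$-core $(M:\delta)$ is a $\delta$-ideal of $A$ contained in $M$, so by Lemma~\ref{QB}(ii) the extension $(M:\delta)B$ is a Poisson ideal of $B$; since $(M:\delta)\subseteq M\subseteq N$ we have $(M:\delta)B\subseteq N$. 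As $\mathcal{P}(N)$ is the largest Poisson ideal inside $N$, this gives $(M:\delta)B\subseteq\mathcal{P}(N)=0$, and because $(M:\delta)\subseteq(M:\delta)B$ we conclude $(M:\delta)=0$. Thus $M$ is a maximal ideal of $A$ containing no nonzero $\delta$-ideal, which is precisely the statement that $A$ is $\delta$-primitive.

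The main obstacle is the maximality of $M=N\cap A$. For a general, non-affine $A$ the contraction of a maximal ideal is only prime, and then the identical core computation would yield merely a \emph{prime} ideal with zero $\delta$-core rather than the required maximal one; the open question flagged after Theorem~\ref{prim} (about an affine $\delta$-primitive algebra that is not $\delta$-G) indicates that the affine converse is genuinely delicate and cannot be routed through the $\delta$-G condition. I would deliberately avoid the alternative route via the homeomorphism $\Gamma$ of Theorem~\ref{homeo} together with the known Ore-extension criteria, since the correspondence of primitive ideals under $\Gamma$ is only announced as a goal of this section and is not yet available; the direct core argument above is self-contained, and once maximality of $M$ is secured the remaining steps are purely formal, using only the behaviour of $\delta$-cores and Lemma~\ref{QB}(ii).
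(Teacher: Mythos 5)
Your proof is correct and takes essentially the same route as the paper's: contract a maximal ideal $N$ of $A[z]$ with $\mathcal{P}(N)=0$ to $M=N\cap A$, show $M$ contains no non-zero $\delta$-ideal by extending $\delta$-ideals to Poisson ideals via Lemma~\ref{QB}(ii), and use affineness to conclude that $M$ is maximal. The only difference is cosmetic, in the maximality step: where you invoke the Nullstellensatz and the algebraic closure of $\C$ to get $A/M=\C$, the paper instead cites Kaplansky's theorems that $A/(N\cap A)$ is a G-domain and that affine algebras are Hilbert rings, an argument that works verbatim over any base field of characteristic zero.
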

\begin{proof}
Suppose that $A[z]$ is Poisson primitive and let $M$ be a maximal ideal of $A[z]$ containing no non-zero Poisson ideal of $A[z]$. Then
$M\cap A$ contains no non-zero $\delta$-ideal of $A$ for if $J$ is a non-zero $\delta$-ideal of $A$ contained in $M\cap A$ then, by Lemma~\ref{QB}(ii),
$JA[z]$ is a non-zero Poisson ideal of $A[z]$ contained in $M$. But, by \cite[Theorem 27]{kap}, $A/(M\cap A)$ is a G-domain. As $A$ is affine, it is a Hilbert ring, by \cite[Theorem 31]{kap}, so $M\cap A$ is a maximal ideal of $A$. Thus $A$ is $\delta$-primitive. The converse holds by Theorem~\ref{prim}.
\end{proof}
\begin{cor}\label{BprimRprim} Let $\delta$ be a non-zero derivation of a commutative affine $\C$-algebra $A$. Then $A[z]$, with the Poisson bracket $\{-,-\}_\delta$, is Poisson primitive if and only if $A[z;\delta]$ is primitive.
\end{cor}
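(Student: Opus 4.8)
The statement of Corollary~\ref{BprimRprim} claims, for an affine domain $A$ with nonzero derivation $\delta$, that $A[z]$ is Poisson primitive iff $A[z;\delta]$ is primitive. I have just established in Theorem~\ref{affprim} that $A[z]$ is Poisson primitive iff $A$ is $\delta$-primitive. So the natural strategy is to produce the parallel characterization on the Ore side, namely that $A[z;\delta]$ is primitive iff $A$ is $\delta$-primitive, and then simply concatenate the two ``iff'' statements through the common middle condition ``$A$ is $\delta$-primitive.''

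First I would recall the forward direction on the Ore side: by the cited \cite[Theorems 1,2]{primore}, if $\delta\neq 0$ and $A$ is $\delta$-primitive (or $\delta$-G) then $A[z;\delta]$ is primitive; this is stated verbatim in the text preceding Theorem~\ref{prim}. For the converse, that primitivity of $A[z;\delta]$ forces $A$ to be $\delta$-primitive in the affine case, I would run the Ore analogue of the argument used in the proof of Theorem~\ref{affprim}. Concretely: take a faithful simple $R$-module, let $M$ be its annihilator so $M$ is a primitive (hence prime) ideal of $R=A[z;\delta]$ containing no nonzero ideal of $R$; then $M\cap A$ contains no nonzero $\delta$-ideal of $A$, because any nonzero $\delta$-ideal $J\subseteq M\cap A$ would give a nonzero ideal $JR\subseteq M$ (using \cite[Lemma 1.3]{dajjac}, exactly as Lemma~\ref{PcapAore} and Corollary~\ref{simple} exploit). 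The core $(M\cap A:\delta)$ is then a $\delta$-prime ideal of $A$ that is $0$; and invoking the Hilbert-ring/G-domain machinery of \cite[Theorems 27, 31]{kap}, affineness upgrades $M\cap A$ to a maximal ideal of $A$, so $A$ is $\delta$-primitive. In other words, the Ore-side proof is word-for-word the Poisson-side proof of Theorem~\ref{affprim} with $\{-,-\}_\delta$-ideals replaced by $\delta$-ideals and Lemma~\ref{QB}(ii) replaced by \cite[Lemma 1.3]{dajjac}.

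Rather than reprove this from scratch, the cleanest write-up simply cites the established equivalence on the Ore side. The converse direction (primitivity of $R$ implies $\delta$-primitivity of $A$) in the Noetherian case is precisely \cite[Theorem 3.7]{gwpaper2}, already flagged in the text; together with \cite[Theorems 1,2]{primore} for the forward direction, this gives: $A[z;\delta]$ is primitive iff $A$ is $\delta$-primitive (in the affine case the $\delta$-G condition collapses into $\delta$-primitive by \cite[Proposition 1.2]{gooddm}, so no ambiguity remains). Combining with Theorem~\ref{affprim} yields the corollary immediately.

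I do not expect any genuine obstacle here, since both halves are already in hand. The only point requiring a little care is the role of the $\delta$-G condition: on the Ore side the sufficient condition for primitivity is ``$\delta$-primitive \emph{or} $\delta$-G,'' which is strictly weaker than ``$\delta$-primitive'' in general (the non-affine examples $A=\C[[y]]$ and $A=\C(t)[y]$ show the two are logically independent). The affineness hypothesis is what eliminates this gap, via \cite[Proposition 1.2]{gooddm}, ensuring that affine $\delta$-G algebras are automatically $\delta$-primitive, so that both sides are governed by the single clean condition. Thus the mild subtlety is checking that the equivalence is clean precisely because $A$ is affine, which is exactly the hypothesis of the corollary.
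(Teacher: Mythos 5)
Your proposal is correct and follows essentially the same route as the paper: the paper's proof likewise combines Theorem~\ref{affprim} with \cite[Theorem 3.7]{gwpaper2} and \cite[Proposition 1.2]{gooddm} to collapse the ``$\delta$-primitive or $\delta$-G'' disjunction in the affine case. You even state the needed direction of \cite[Proposition 1.2]{gooddm} correctly (affine $\delta$-G implies $\delta$-primitive), whereas the paper's proof, in contrast to its earlier discussion, quotes that implication in the reverse direction, an apparent slip.
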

\begin{proof} As we observed above, \cite[Proposition 1.2]{gooddm} tells us that, in the affine case, if $A$ is $\delta$-primitive then $A$ is $\delta$-G. The result is then immediate from Theorem~\ref{affprim} and \cite[Theorem 3.7]{gwpaper2}.
\end{proof}
\begin{cor}  Let $\delta$ be a non-zero derivation of a commutative affine $\C$-algebra $A$, let $B=A[z]$ with the Poisson bracket $\{-,-\}_\delta$ and let $R=A[z;\delta]$. \begin{enumerate}
\item In $\Pspec(B)$, the Poisson primitive ideals are the maximal elements of $\Pspec_1 B$, that is the Poisson ideals $P$ of $B$ such that $B/P\simeq \C$, and the ideals of the form $QB$ where $Q$ is a $\delta$-primitive ideal of $A$.
\item In $\spec(R)$, the primitive ideals are the maximal elements of $\spec_1 R$, that is the ideals $P$ of $B$ such that $R/P\simeq \C$, and the ideals of the form $QR$ where $Q$ is a $\delta$-primitive ideal
of $A$.
\item In the homeomorphism between $\Pspec(B)$ and $\spec(R)$, the Poisson primitive ideals of $B$ correspond to the primitive ideals of $R$.
\end{enumerate}
\end{cor}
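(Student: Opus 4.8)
The plan is to describe the Poisson primitive ideals of $B$ and the primitive ideals of $R$ separately on the two pieces $\Pspec_1 B,\Pspec_2 B$ and $\spec_1 R,\spec_2 R$ of Notation~\ref{twotypes}, to prove (i) and (ii) by parallel arguments, and then to read off (iii) by tracking these descriptions through the explicit homeomorphism $\Gamma$ of Theorem~\ref{homeo}.

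For (i) I would begin with a maximal ideal $M$ of $B$ and analyse $\mathcal{P}(M)$. Since $A$ is affine it is a Hilbert ring by \cite[Theorem 31]{kap}, so $\mathfrak{m}:=M\cap A$ is maximal in $A$ and $B/M\simeq\C$. Writing $Q:=\mathcal{P}(M)\cap A$, a sandwich argument identifies $Q$ with the $\delta$-primitive ideal $(\mathfrak{m}:\delta)$: on the one hand $Q$ is a $\delta$-ideal contained in $\mathfrak{m}$; on the other $(\mathfrak{m}:\delta)B$ is Poisson by Lemma~\ref{QB}(ii) and lies in $M$, hence in $\mathcal{P}(M)$. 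I then split on whether $\delta(A)\subseteq Q$. If not, then $\mathcal{P}(M)\in\Pspec_2 B$ and Lemma~\ref{PcapA}(ii) gives $\mathcal{P}(M)=QB$ with $Q$ $\delta$-primitive. If so, then $\mathcal{P}(M)$ is residually null, the induced bracket on $B/\mathcal{P}(M)$ is zero, $M/\mathcal{P}(M)$ is automatically a Poisson ideal, and maximality of the Poisson core forces $\mathcal{P}(M)=M$, a maximal element of $\Pspec_1 B$ with $B/M\simeq\C$. For the reverse inclusions: a maximal element of $\Pspec_1 B$ corresponds under $\theta_J$ to a maximal ideal of the affine algebra $S$, so has residue field $\C$ and, being Poisson and maximal, is its own Poisson core; and for a $\delta$-primitive $Q$ with $\delta(A)\not\subseteq Q$ one applies Theorem~\ref{affprim} to the affine domain $A/Q$ with $\overline{\delta}$ and transports the conclusion across the isomorphism $\theta_Q\colon B/QB\to(A/Q)[z]$ of Lemma~\ref{QB}(ii) to see that $QB$ is Poisson primitive.

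For (ii) I would run the same argument on the Ore side. By Sigurdsson~\cite{sig} every prime of $R$ is completely prime, so each $R/P$ is a domain; if $P\in\spec_1 R$ is primitive then $R/P$ is a commutative primitive ring, hence a field, and being affine it is $\C$, so $P$ is a maximal element of $\spec_1 R$, while conversely the maximal elements of $\spec_1 R\cong\spec S$ have residue field $\C$ and are primitive. For $\spec_2 R$, Lemma~\ref{PcapAore} presents such a prime as $QR$ with $Q$ a $\delta$-prime ideal satisfying $\delta(A)\not\subseteq Q$; primitivity of $QR$ is equivalent to primitivity of $0$ in $R/QR\cong(A/Q)[z;\overline{\delta}]$, and applying Corollary~\ref{BprimRprim}, together with \cite[Theorem 3.7]{gwpaper2} and \cite[Proposition 1.2]{gooddm}, to the affine domain $A/Q$ shows this happens exactly when $Q$ is $\delta$-primitive.

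Finally (iii) is essentially formal once (i) and (ii) are in place. On $\Pspec_2 B$ the homeomorphism sends $QB$ to $QR$, and by (i) and (ii) the Poisson primitive members (those with $Q$ $\delta$-primitive) match the primitive members of $\spec_2 R$; on $\Pspec_1 B$ the map $\Gamma_1$ is induced by the $\C$-algebra isomorphism $\kappa\colon B/JB\to R/JR$, which carries maximal ideals with residue field $\C$ to maximal ideals with residue field $\C$, so by (i) and (ii) it matches the Poisson primitive ideals in $\Pspec_1 B$ with the primitive ideals in $\spec_1 R$. Hence $\Gamma$ restricts to a bijection between the two sets. I expect the main obstacle to be the two reduction-to-quotient steps, namely identifying $\mathcal{P}(M)\cap A$ (respectively $P\cap A$) with a $\delta$-primitive ideal $Q$ and then deciding primitivity of $QB$ and $QR$ by passing to the affine domain $A/Q$ and invoking the affine-case equivalences of Theorem~\ref{affprim} and Corollary~\ref{BprimRprim}; the residually null case, where one must argue that the Poisson core of $M$ collapses onto $M$ itself, is the other point that genuinely uses the Hilbert ring input.
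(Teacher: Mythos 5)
Your proposal is correct and follows essentially the same route as the paper: the same $\Pspec_1/\Pspec_2$ and $\spec_1/\spec_2$ decomposition, the same observation that a residually null prime is (Poisson) primitive exactly when it is a maximal ideal with residue field $\C$, and the same reduction of the $\Pspec_2$ and $\spec_2$ cases to Theorem~\ref{affprim} and Corollary~\ref{BprimRprim} applied to the affine domain $A/Q$, with (iii) then formal. The only real difference is organizational: where the paper settles both directions at once by quoting the if-and-only-if statements, you re-derive the forward direction by computing $\mathcal{P}(M)\cap A=(M\cap A:\delta)$ via a sandwich argument, which in effect repeats inline the Hilbert-ring step already inside the proof of Theorem~\ref{affprim} -- sound, but redundant.
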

\begin{proof}
(i) Let $P$ be a Poisson prime ideal of $B$. Suppose first that $P\in\Pspec_1 B$. Then the Poisson bracket on $B/P$ is $0$ so $P$ is Poisson primitive if and only if $P$ is maximal if and only if $B/P\simeq \C$.
Now suppose that $P$ in $\Pspec_2 B$. Then $P=QB$ for some $\delta$-prime ideal $Q$ of $A$ with $\delta(A)\not\subseteq Q$ and, by Corollary~\ref{affprim} applied to $A/Q$, $P$ is Poisson primitive if and only if $Q$ is $\delta$-primitive.

(ii) Let $P$ be a  prime ideal of $R$. Suppose first that $P\in\spec_1 B$. Then $R/P$ is commutative so $P$ is  primitive if and only if $P$ is maximal if and only if $B/P\simeq \C$.
Now suppose that $P\in\spec_2 R$. Then $P=QR$ for some $\delta$-prime ideal $Q$ of $A$ with $\delta(A)\not\subseteq Q$ and, by Corollaries~\ref{affprim} and \ref{BprimRprim} applied to $A/Q$, $P$ is primitive if and only if $Q$ is $\delta$-primitive.

(iii) This follows from (i) and (ii).
\end{proof}

\section{Examples in $\C[x,y,z]$} \label{f30}
Here we look at some examples where $A=\C[x,y]$, so that $B=\C[x,y,z]$,
the polynomial algebra in three indeterminates. For $w=x,y$ or $z$, we denote the derivation
$\partial/\partial w$ of $B$ by $\partial_w$ and, for $a\in B$, we write $a_w$ for $\partial_w(a)$ and $\grad a$ for the triple $(a_x,a_y,a_z)\in B^3$.
 Poisson brackets on $\C[x,y,z]$ are the subject of \cite{dajsqoh}. Any such bracket is determined by the triple $(f,g,h)\in B^3$ such that
 \[
 \{y,z\}=f,\quad \{z,x\}=g\;\text{ and }\{x,y\}=h.\]
A triple $F=(f,g,h)\in B^3$ is a {\it Poisson triple} if it does determine a Poisson bracket in this way.
By \cite[Proposition 1.17(1)]{dajsqoh}, a triple $F=(f,g,h)\in B^3$ is a Poisson triple  if and only if
$F.\curl F=0$. Similar results are true for the rational function field $Q(B)=\C(x,y,z)$ and the completion
$\widehat{B}$ of $B$ at any maximal ideal.

For any $a,b\in B$, there is a Poisson bracket on $B$ such that
\[\{y,z\}=ba_x,\quad \{z,x\}=ba_y\;\text{ and }\{x,y\}=ba_z.\]
We call such a bracket \emph{exact} if $b=1$ and \emph{m-exact} in general. A Poisson bracket on $B$ is
\emph{qm-exact}, respectively \emph{cm-exact}, if there exist $a, b\in Q(B)$, resp $a, b\in \widehat{B}$ for some maximal ideal of $B$, such that
\[\{y,z\}=ba_x\in B,\quad \{z,x\}=ba_y\in B\;\text{  and }\{x,y\}=ba_z\in B.\]
In \cite{dajsqoh}, it is shown that every Poisson bracket on $B$ is cm-exact and the Poisson spectrum
is determined for a qm-exact bracket with $a=st^{-1}$ and $b=t^2$, $s$ and $t$ being coprime elements of $B$. Taking $t=1$, this includes the exact brackets.

In the remainder of this section, we consider non-exact Poisson brackets on
$B=\C[x,y,z]$ that extend the zero Poisson bracket on $A=\C[x,y]$, that is, we
consider Poisson brackets on $B$ with $\{x,y\}=0$.

\begin{lemma}
Let $f,g\in B$ and let $F=(f,g,0)$. Then $F$ is a Poisson triple
if and only
if there exist $h\in B$ and $f_1,g_1\in A$ such
that $f=hf_1$ and $g=hg_1$.
\end{lemma}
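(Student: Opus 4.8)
The plan is to turn the Poisson-triple criterion $F\cdot\curl F=0$ into a single scalar identity and then to extract the factorization from unique factorization in $B=\C[x,y,z]$. First I would compute the curl of $F=(f,g,0)$: since the third component is zero, $\curl F=(-g_z,\,f_z,\,g_x-f_y)$, so that $F\cdot\curl F=g f_z-f g_z$. By \cite[Proposition 1.17(1)]{dajsqoh}, $F$ is therefore a Poisson triple if and only if
\[
g f_z=f g_z.
\]
The ``if'' direction is then a one-line check: if $f=hf_1$ and $g=hg_1$ with $f_1,g_1\in A$, then $(f_1)_z=(g_1)_z=0$, hence $f_z=h_zf_1$ and $g_z=h_zg_1$, and so $gf_z-fg_z=hh_z(g_1f_1-f_1g_1)=0$.

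For the converse I would assume $gf_z=fg_z$. If $f=g=0$ take $h=0$; otherwise, since $A=\C[x,y]$ is a unique factorization domain so is $B=A[z]$, and I would set $h=\gcd(f,g)$ in $B$ and write $f=hf_1$, $g=hg_1$ with $f_1,g_1\in B$ coprime. Substituting $f_z=h_zf_1+h(f_1)_z$ and $g_z=h_zg_1+h(g_1)_z$ into the identity, the two copies of the term $hh_zf_1g_1$ cancel, and dividing by the nonzero element $h^2$ (legitimate as $B$ is a domain) leaves
\[
g_1(f_1)_z=f_1(g_1)_z.
\]
It then remains only to show that the coprime cofactors $f_1,g_1$ in fact lie in $A$, equivalently that $\partial_z$ annihilates them.

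This last point carries the real content. From $g_1(f_1)_z=f_1(g_1)_z$ one reads off $f_1\mid g_1(f_1)_z$, and since $\gcd(f_1,g_1)=1$ in the UFD $B$ this forces $f_1\mid(f_1)_z$. But in $B=A[z]$, with $A$ a domain, $z$-degrees add under multiplication, while $\partial_z$ sends any element of positive $z$-degree $d$ to one of $z$-degree $d-1$, which is nonzero since $\C$ has characteristic zero; hence $f_1\mid(f_1)_z$ can hold only if $f_1$ has $z$-degree $0$, that is $(f_1)_z=0$ and $f_1\in A$, and symmetrically $g_1\in A$. Together with $h\in B$ this is exactly the desired factorization. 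I expect the main obstacle to be this degree-and-divisibility step: one must be sure that the gcd-cofactors are genuinely coprime in $B$ (not merely in $A$), that the mixed terms cancel cleanly after substitution so that $f_1\mid(f_1)_z$ really emerges, and that the $z$-degree bookkeeping remains valid over the non-field coefficient domain $A$.
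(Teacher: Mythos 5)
Your proof is correct, and its overall skeleton matches the paper's: both reduce the Poisson-triple condition to the scalar identity $gf_z=fg_z$ via $\curl(f,g,0)=(-g_z,f_z,g_x-f_y)$, both dispose of the ``if'' direction by direct computation, and both prove the converse by factoring out $h=\gcd(f,g)$ in the UFD $B$ and showing that the coprime cofactors $f_1,g_1$ lie in $A$. Where you genuinely diverge is in the decisive step. The paper passes to the rational function field: from $gf_z=fg_z$ it deduces $\partial_z(f/g)=0$, hence $f/g\in\C(x,y)$, which yields a relation $pf=qg$ with $p,q\in A$; then $pf_1=qg_1$, and any irreducible factor $u\in B\backslash A$ of $f_1$ would have to divide $g_1$ (since $u\nmid q$), contradicting coprimality. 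You instead stay entirely inside $B$: substituting $f=hf_1$, $g=hg_1$ into $gf_z=fg_z$ and cancelling $h^2$ gives the cofactor identity $g_1(f_1)_z=f_1(g_1)_z$, whence $f_1\mid(f_1)_z$ by coprimality in the UFD, and the strict drop in $z$-degree under $\partial_z$ (with nonvanishing guaranteed by characteristic zero) forces $\deg_z f_1=0$, i.e.\ $f_1\in A$, and symmetrically $g_1\in A$. Your route buys self-containedness --- it never invokes the fact that the kernel of $\partial_z$ on $\C(x,y,z)$ is $\C(x,y)$ --- and it displays exactly where characteristic zero is used; the paper's route gets the needed relation essentially for free from the fraction field and finishes with a one-line irreducible-factor argument. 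A further small advantage of your treatment is that the gcd argument handles the degenerate case $g=0$, $f\neq 0$ uniformly (giving $h=f$, $f_1=1$, $g_1=0$), whereas the paper's separate disposal of this case ($h=1$, $f_1=f$) is actually a slip, since $f$ need not lie in $A$.
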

\begin{proof}
Suppose that $F$ is a Poisson triple. If $g=0$ we can take $h=1$,  $f_1=f$ and $g_1=0$ so we may assume that $g\neq 0$. As $\curl((f,g,0))=(-g_z,f_z,g_x-f_y)$, we have  $fg_z=gf_z$. Hence
$\partial_z(f/g)=0$ and
$pf=qg$ for some $p,q\in A$. Let $h$ be the highest common factor of $f$ and $g$ in $B$ and let $f_1, g_1\in B$ be such that $f=hf_1$ and $g=hg_1$. Then $pf_1=qg_1$. If $f_1\notin A$ then $f_1$ has an irreducible factor $u$ in $B\backslash A$ and, as $q\in A$, $u$ must divide $g_1$, contradicting the choice of $h$. Thus $f_1\in A$ and similarly
$g_1\in A$.

Conversely, suppose that $F=(hf_1,hg_1,0)$ where $h\in B$ and $f_1,g_1\in A$. Then $\curl F$ has the form
$(-g_1h_z,f_1h_z,\ell)$, where $\ell\in B$, so $F.\curl F=0$ and hence $F$ is a Poisson triple.
\end{proof}

The Poisson prime ideals of $B$ for a Poisson triple $F=(hf_1,hg_1,0)$ are the prime ideals containing
$h$ and the Poisson primes for the Poisson triple $(f_1,g_1,0)$ so it suffices to consider the case where $f,g\in A$.
Thus $\ham
x=-g\partial_z$, $\ham y=f\partial_z$ and $\ham
z=g\partial_x-f\partial_y$. Also $\ham z(A)\subseteq A$, so
that $\ham z$ restricts to a derivation of $A$
and the results of Section~\ref{poissonore} apply with $\delta$ being the restriction to $A$ of
$g\partial_x-f\partial_y$.

If $a\in A$ then the corresponding exact bracket on $B$
has $\{y,z\}=a_x$, $\{z,x\}=a_y$ and $\{x,y\}=0$. The following theorem is a special case of \cite[Theorem 3.8]{dajsqoh}.

\begin{theorem}
\label{Pspecexact} Let $a\in A\backslash\{0\}$. The Poisson prime ideals of $B$
under the exact bracket determined by $a$ are $0$, the residually null Poisson prime ideals
and the height one prime ideals $uA$, where $u$ is an irreducible factor of $a-\lambda$ for
some $\lambda\in \C$ such that $a-\lambda$ is a non-zero non-unit.
\end{theorem}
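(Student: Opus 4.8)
The plan is to read off $\Pspec B$ from the machinery of Section~\ref{poissonore} together with a concrete analysis of the derivation $\delta=a_y\partial_x-a_x\partial_y$ to which the exact bracket corresponds (as recorded just before the statement). The single fact that drives the whole argument is that $a$ is a first integral of $\delta$: indeed $\delta(a)=a_ya_x-a_xa_y=0$. I would begin by splitting $\Pspec B=\Pspec_1(B)\sqcup\Pspec_2(B)$ as in Notation~\ref{twotypes}, where here $J=\delta(A)A=(a_x,a_y)A$. The residually null primes are by definition exactly $\Pspec_1(B)$, so they require no further description, and $0$ lies in $\Pspec_2(B)$ and is listed on its own. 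Everything then reduces to identifying the non-zero members of $\Pspec_2(B)$. By Lemma~\ref{PcapA} each such $P$ equals $QB$ for a non-zero $\delta$-prime ideal $Q=P\cap A$ with $\delta(A)\not\subseteq Q$, and conversely, by Lemmas~\ref{dpp} and~\ref{QB}, every such $QB$ is a non-zero Poisson prime lying in $\Pspec_2(B)$. Thus the task becomes the classification of the non-zero $\delta$-prime ideals $Q$ of $A=\C[x,y]$ with $\delta(A)\not\subseteq Q$.

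Since $A$ is a two-dimensional UFD, such a $Q$ has height $1$ or $2$, and I would dispose of height $2$ first. A maximal ideal $M$, corresponding to a point $p$, is a $\delta$-ideal precisely when $\delta(x)=a_y$ and $\delta(y)=-a_x$ both vanish at $p$, i.e. when the Hamiltonian vector field vanishes at $p$; but then $\delta(b)=a_yb_x-a_xb_y$ vanishes at $p$ for every $b$, so $\delta(A)\subseteq M$. Hence no height $2$ ideal can satisfy $\delta(A)\not\subseteq Q$, and $Q=uA$ for an irreducible $u\in A$, with $uA$ a $\delta$-ideal if and only if $u\mid\delta(u)$. The heart of the proof is then to show that, for irreducible $u$, one has $u\mid\delta(u)$ together with $\delta(A)\not\subseteq uA$ if and only if $u$ is a factor of $a-\lambda$ for some $\lambda\in\C$ (with $a-\lambda$ a non-zero non-unit, which is automatic once $u$ is such a factor).

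For the forward implication I would factor $a-\lambda=c\prod_iu_i^{e_i}$ and take the logarithmic derivative: from $0=\delta(a-\lambda)=(a-\lambda)\sum_ie_i\delta(u_i)/u_i$ one gets $\sum_ie_i\delta(u_i)\prod_{j\neq i}u_j=0$ in $A$, and reducing modulo a fixed $u_k$ annihilates every term but the $k$-th, giving $u_k\mid\delta(u_k)$ because the $u_j$ are pairwise non-associate irreducibles and $e_k\neq 0$ in $\C$. The converse is where the real work lies. Given irreducible $u$ with $u\mid\delta(u)$ and $\delta(A)\not\subseteq uA$, the ideal $uA$ is $\delta$-prime and the induced derivation $\ov\delta$ on the coordinate ring $A/uA$ of the irreducible affine curve $\{u=0\}$ is non-zero. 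I would extend $\ov\delta$ to the function field $K=Q(A/uA)$, which has transcendence degree one over $\C$, and invoke the standard fact that, in characteristic zero with $\C$ algebraically closed, the field of constants of any non-zero derivation of such a $K$ is $\C$ itself. Since $\ov\delta(\ov a)=\ov{\delta(a)}=0$, this forces $\ov a=\lambda\in\C$, that is $u\mid(a-\lambda)$; and $\delta(A)\not\subseteq uA$ is precisely the condition $\ov\delta\neq 0$, i.e. that $u$ does not divide both $a_x$ and $a_y$.

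Assembling these, the non-zero members of $\Pspec_2(B)$ are exactly the ideals $uB$ with $u$ an irreducible factor of some $a-\lambda$ and $\delta(A)\not\subseteq uA$; those factors $u$ of some $a-\lambda$ with $\delta(A)\subseteq uA$ instead yield residually null $uB\in\Pspec_1(B)$ and so are already counted among the residually null primes. Therefore $\Pspec B$ is the union of $0$, the residually null Poisson primes, and the height one primes $uB$ with $u$ an irreducible factor of some suitable $a-\lambda$, as claimed. I expect the main obstacle to be the converse implication of the height one classification, and specifically the clean justification that a non-zero $\C$-derivation of a one-variable function field over the algebraically closed field $\C$ admits no constants beyond $\C$; everything else is bookkeeping on top of Lemma~\ref{PcapA} and the UFD structure of $\C[x,y]$.
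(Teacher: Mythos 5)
Your proof is correct, but it takes a genuinely different route from the paper, which offers no argument at all for Theorem~\ref{Pspecexact}: it simply records the result as a special case of \cite[Theorem 3.8]{dajsqoh} (the determination of Poisson spectra for qm-exact brackets, specialised to $t=1$). You instead give a self-contained proof inside the paper's own framework: the decomposition $\Pspec B=\Pspec_1(B)\sqcup\Pspec_2(B)$ of Notation~\ref{twotypes}, Lemma~\ref{PcapA} (with its converse via Lemmas~\ref{dpp} and~\ref{QB}) to reduce the non-residually-null part of the spectrum to the nonzero $\delta$-prime ideals $Q$ of $A$ with $\delta(A)\not\subseteq Q$, and then a direct classification of those driven by the Casimir property $\delta(a)=0$: the Nullstellensatz disposes of height two, the logarithmic-derivative computation shows every irreducible factor of $a-\lambda$ generates a $\delta$-ideal, and the no-new-constants theorem for one-variable function fields over $\C$ gives the converse. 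What your route buys is transparency -- it stays within the machinery the paper has already built, and it makes visible exactly where characteristic zero and algebraic closedness enter (the coefficient $e_k\neq 0$ when reducing $\sum_i e_i\delta(u_i)\prod_{j\neq i}u_j$ modulo $u_k$, and the constants argument, respectively) -- at the cost of invoking, without proof, the standard fact that a nonzero $\C$-derivation of a function field of transcendence degree one over $\C$ has constant field $\C$. The paper's route gets the statement for free but hides the mechanism in an external reference.

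Two small points to tidy in a final write-up. First, the \lq if and only if\rq\ you announce as the heart of the proof (that $u\mid\delta(u)$ and $\delta(A)\not\subseteq uA$ hold iff $u$ divides some $a-\lambda$) is literally false in one direction: an irreducible factor $u$ of $a-\lambda$ may divide both $a_x$ and $a_y$ (take $a=x^2$, $u=x$, $\lambda=0$), in which case $\delta(A)\subseteq uA$. Your actual argument never uses this false half -- the logarithmic-derivative step proves only $u\mid\delta(u)$, and your assembly paragraph correctly separates off the factors with $\delta(A)\subseteq uA$ as residually null primes -- so this is a misstatement in the roadmap, not a gap, but the sentence should be corrected. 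Second, the hypothesis $a\in A\backslash\{0\}$ allows constant $a\in\C^*$, where $\delta=0$, Lemma~\ref{PcapA} does not apply, and your claim that $0\in\Pspec_2(B)$ fails; the theorem is trivially true there (the bracket is zero and every prime is residually null), but that degenerate case should be noted and set aside at the outset.
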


Combining this with Theorem~\ref{homeo} and its proof, we obtain the following corollary.
\begin{cor}
Let $a\in A\backslash\{0\}$, let $\delta$ be the derivation of $A$ such that $\delta(x)=a_y$ and $\delta(y)=-a_x$
and let $R=A[x;\delta]$. Let $J=a_yA+a_xA$. Then
\begin{enumerate}
\item $JR$ is an ideal of $R$ and $R/JR\simeq (A/J)[z]$.
\item The prime ideals of $R$
under the exact bracket determined by $a$ are $0$, the height one prime ideals $uR$, where $u$ is an irreducible factor of $a-\lambda$ for
some $\lambda\in \C$ such that $a-\lambda$ is a non-zero non-unit and the prime ideals of the form $\pi^{-1}(Q)$ where
$Q$ is a prime ideal of $(A/J)[z]$ and $\pi$ is the composition $R\twoheadrightarrow R/JR\simeq (A/J)[z]$.
\end{enumerate}
\end{cor}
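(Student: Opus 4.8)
The plan is to obtain both parts by transporting Theorem~\ref{Pspecexact} across the homeomorphism $\Gamma$ of Theorem~\ref{homeo}, so that essentially no new computation is required beyond identifying the relevant ideals. (I read $R=A[z;\delta]$.) I shall assume $a$ is non-constant, so that $\delta\neq0$ and the machinery of Section~\ref{poissonore} applies; the degenerate case of a non-zero constant $a$ gives $\delta=0$, $J=0$ and a commutative $R=A[z]=(A/J)[z]$, for which (ii) is trivial.

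For (i), the first step is to match the ideal $J$ of Notation~\ref{twotypes} with $a_yA+a_xA$. Since $\delta$ is the restriction of $a_y\partial_x-a_x\partial_y$, we have $\delta(x)=a_y$ and $\delta(y)=-a_x$, so $a_x,a_y\in\delta(A)$; conversely $\delta(p)=a_yp_x-a_xp_y\in a_yA+a_xA$ for all $p\in A$, whence $J=\delta(A)A=a_yA+a_xA$. Parts (iii) and (iv) of Notation~\ref{twotypes} then immediately give that $JR$ is an ideal with $R/JR$ commutative and that $\psi_J$ is an isomorphism $R/JR\to(A/J)[z;\overline\delta]$; as $\overline\delta=0$ on $A/J$ this is $(A/J)[z]$, which is (i).

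For (ii), I would invoke the homeomorphism $\Gamma\colon\Pspec B\to\spec R$ of Theorem~\ref{homeo} and recall from its proof exactly how $\Gamma$ behaves: it acts as $\Gamma_1$ on the residually null primes $\Pspec_1B$, carrying them bijectively onto $\spec_1R=\{P\in\spec R:JR\subseteq P\}$, and as $\Gamma_2$ on $\Pspec_2B$, with $\Gamma_2(QB)=QR$. I then transport the three families of Theorem~\ref{Pspecexact} one at a time. The ideal $0$ lies in $\Pspec_2B$ (because $\delta\neq0$) and $\Gamma_2(0)=0R=0$. A height one prime $uB$ that is not residually null lies in $\Pspec_2B$ and equals $QB$ with $Q=uB\cap A=uA$, so $\Gamma_2(uB)=uR$. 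Finally the residually null primes are carried onto $\spec_1R$, and since $\spec_1R$ is homeomorphic to $\spec(R/JR)\cong\spec((A/J)[z])$ via the quotient map $\pi$, these are precisely the ideals $\pi^{-1}(Q)$ with $Q\in\spec((A/J)[z])$. As $\Gamma$ is a bijection, the three families exhaust $\spec R$.

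The one point that needs care, and the only place where I would slow down, is the bookkeeping between the second and third families. A height one prime $uB$ is residually null exactly when $J\subseteq uA$, i.e. when $u$ divides both $a_x$ and $a_y$ (as happens, for instance, when $u$ is a repeated factor of some $a-\lambda$); such a $uB$ sits in $\Pspec_1B$, so its image $uR$ is computed by $\Gamma_1$ and reappears among the $\pi^{-1}(Q)$. This makes the listed families overlap, but the overlap is harmless: the claim is only a description of the \emph{set} $\spec R$, and each $uR$ is a genuine prime of $R$ regardless of which branch of $\Gamma$ produces it. Verifying that $\Gamma_1$ and $\Gamma_2$ agree (both yielding $uR$) on these boundary primes, via the defining formula $\Gamma_1(P)/JR=\kappa(P/JB)$, is the single routine check I would make to be sure the description is complete and consistent.
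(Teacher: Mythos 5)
Your proposal is correct and coincides with the paper's approach: the paper gives no separate proof at all, stating only that the corollary is obtained by combining Theorem~\ref{Pspecexact} with Theorem~\ref{homeo} and its proof, which is exactly the transport-along-$\Gamma$ argument you carry out. The details you supply --- the identification $J=\delta(A)A=a_xA+a_yA$, the degenerate constant-$a$ case, and the check via $\Gamma_1(P)/JR=\kappa(P/JB)$ that a residually null height one prime $uB$ still maps to $uR$ --- are precisely the bookkeeping the paper leaves implicit.
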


\begin{example}
Let $a=x^2+y^2$. Then $\{z,x\}=2y, \{y,z\}=2x$ and $\{x,y\}=0$. The residually null Poisson prime ideals of $B$ are
$xB+yB$ and the maximal ideals that contain it. The other Poisson prime ideals of $B$
are $0$,
the height one prime ideals $(x+iy)A$, $(x-iy)A$ and $(x^2+y^2-\lambda)A$, where $\lambda\in \C^*$.
Note that those of the form $(x^2+y^2-\lambda)A$ are maximal Poisson ideals.

If $\delta=2y\partial_x-2x\partial_y$, so that $\delta(x)=2y$ and $\delta(y)=-2x$ and $R=A[z;\delta]$ then the prime spectrum of $R$ consists of $0$,
the height one prime ideals $(x+iy)R$, $(x-iy)R$ and $(x^2+y^2-\lambda)R$, where $\lambda\in \C^*$,
$xR+yR$ and $xR+yR+(z-\mu)R$, where $\mu\in \C$. For each $\lambda\in \C^*$, the algebra $R/(x^2+y^2-\lambda)R$ is simple.
\end{example}

In the remainder of the paper we consider non-exact Poisson brackets on $B$, beginning with some for which $B$ is Poisson simple. The following
result of Shamsuddin, for which a proof may be found in
\cite[Proposition 3.2]{Cout}, is useful in identifying Poisson
brackets for which $B$, or a localization of $B$, is Poisson simple.

\begin{prop}
Let $C$ be a commutative domain and let $g=at+b\in C[t]$, where
$a,b\in C$. Suppose that there exists a derivation $\delta$ of
$C[t]$ such that $\delta(C)\subseteq C$, $C$ is $\delta|_C$-simple,
$\delta(t)=g$ and,
 for all $r\in C$, $\delta(r)\neq ar+b$.
Then $C[t]$ is $\delta$-simple. \label{shamsim}
\end{prop}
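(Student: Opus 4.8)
The plan is to show that any non-zero $\delta$-ideal $I$ of $C[t]$ must equal $C[t]$, by examining the minimal $t$-degree of its non-zero elements. Fix such an $I$ and let $n\in\N$ be least such that $I$ contains a non-zero element of degree $n$ in $t$. If $n=0$ then $I\cap C\neq 0$; since $\delta(C)\subseteq C$, the set $I\cap C$ is a non-zero $\delta|_C$-ideal of $C$, hence equals $C$ by $\delta|_C$-simplicity, so $1\in I$ and $I=C[t]$. The work therefore lies in ruling out the case $n\geq 1$.

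So suppose $n\geq 1$. First I would upgrade minimality into the existence of a \emph{monic} element of $I$ of degree $n$. Let $J$ be the set consisting of $0$ together with the leading coefficients of the degree-$n$ elements of $I$. Since $C$ is a domain, one checks readily that $J$ is an ideal of $C$. The crucial point is that $J$ is in fact a $\delta|_C$-ideal: if $p=ct^n+\cdots\in I$, then $\delta(p)\in I$ has degree at most $n$, with coefficient of $t^n$ equal to $\delta(c)+nac$ (using $\delta(t)=at+b$); by minimality this coefficient lies in $J$, and since $nac\in J$ already, we deduce $\delta(c)\in J$. As $J\neq 0$, $\delta|_C$-simplicity forces $J=C$, so $I$ contains a monic $p=t^n+c_{n-1}t^{n-1}+\cdots+c_0$.

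Now minimality pins down $\delta(p)$ exactly. Computing, the coefficient of $t^n$ in $\delta(p)$ is $na$, so $\delta(p)-na\,p\in I$ has degree $<n$ and hence is $0$; that is, $\delta(p)=na\,p$. Comparing coefficients of $t^{n-1}$ on the two sides yields $\delta(c_{n-1})=ac_{n-1}-nb$. Because we are in characteristic zero, $n$ is invertible in $C$, and on setting $s=-c_{n-1}/n$ this rearranges to $\delta(s)=as+b$, contradicting the hypothesis that $\delta(r)\neq ar+b$ for every $r\in C$. Hence $n\geq 1$ is impossible, forcing $n=0$, and the first paragraph then gives $I=C[t]$.

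I expect the main obstacle to be the middle step, namely verifying that the leading-coefficient set $J$ is a $\delta|_C$-ideal. This is the only place where $\delta|_C$-simplicity of $C$ is brought to bear on the higher-degree situation, and it is exactly what promotes a minimal-degree element to a monic one; once a monic $p$ is in hand, the forbidden relation $\delta(s)=as+b$ drops out of a single coefficient comparison.
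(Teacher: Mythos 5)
Your proof is correct. Note, though, that the paper contains no proof of this proposition at all: it attributes the result to Shamsuddin and points to \cite[Proposition 3.2]{Cout} for a proof, and your argument --- take minimal $t$-degree $n$, show the leading coefficients of degree-$n$ elements of $I$ form a non-zero $\delta|_C$-ideal of $C$ and hence contain $1$, then use minimality to get $\delta(p)=nap$ for the resulting monic $p$ and read off the forbidden relation $\delta(s)=as+b$ from the $t^{n-1}$ coefficient --- is essentially the standard one found there, so there is nothing to compare beyond that. One small point to tighten: in a bare commutative domain of characteristic zero the integer $n$ need not be invertible (consider $\mathbb{Z}$), so \lq\lq characteristic zero\rq\rq\ alone is not quite a justification; this is harmless here, either because the paper's standing convention makes every algebra a $\C$-algebra, or, intrinsically, because $nC$ is a non-zero $\delta|_C$-ideal of $C$, so $\delta|_C$-simplicity forces $nC=C$ and hence $1/n\in C$.
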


\begin{examples}
\label{bergmanetal}
In the case where $A=\C[x,y]$ and $B=\C[x,y,z]$,
there are many known examples of simple derivations
$\delta=g\partial_x-f\partial_y$ of $A$. 
For all of these, $B$ is Poisson simple for the
Poisson bracket determined by the triple $(f,g,0)$.
In many of these examples $g=1$ so that
\begin{equation}\label{g1}
\{x,y\}=0,\quad \{z,x\}=1\;\text{ and }\{y,z\}=f.\end{equation}
In the best known example which is due to, but not published by, Bergman and is documented in \cite[\S6]{Cout2}, $f=-(1+xy)$.
The simplicity of $\delta$ follows easily from
Proposition~\ref{shamsim}, with $R=\C[x]$ and $t=y$ and the same argument works for $f=-(1+\lambda xy)$, $\lambda\in\C^*$. When $\lambda=-1$ and $x, y$ and $z$ are written $-x_1, x_3$ and $x_2$ respectively, this gives the first published
example, due to Farkas \cite[Example following Lemma 15]{farkas2},
of a Poisson bracket on $B$ for which $B$ is Poisson simple.

Other examples of polynomials $f\in A$ for which $B$ is Poisson simple under the Poisson bracket in \eqref{g1} include:
 \begin{enumerate}
\item
 $f=p(x)-y^2$, where $p(x)\in \C[x]$ has odd degree. See \cite[Theorem 6.2]{nowetal}.
\item
$f=-(y^m+ax^n)$, where $m,n\in \N$, $m\geq 2$ and $a\in \C\backslash\{0\}$. See \cite[Theorem 1]{havran} which generalised an earlier result \cite{nowicki}, for the case $n=1$.
\end{enumerate}
\end{examples}

\begin{example}\label{ox}
In contrast to the examples in Examples~\ref{bergmanetal}, $B$ is also Poisson simple for the Poisson bracket such that
\[
\{x,y\}=0,\quad \{z,x\}=y^3\;\text{ and }\{y,z\}=xy-1,\]
which has the property that, for all $b\in B$, $\{z,b\}$, $\{x,b\}$ and $\{y,b\}$ are not units.
Clearly $\{x,b\}=-y^3\partial_z(b)$ and $\{y,b\}=(xy-1)\partial_z(b)$ are never units. For $\{z,(\sum a_iz^i)\}=\sum \{z,a_i\}z^i\}$ to be a unit it is necessary that $\{z,a_i\}$ is a unit and it is shown in \cite{dajox} that if $a\in A$ then $\{z,a\}=\delta(a)$ is not a unit.
\end{example}

\begin{remark}
The examples in \ref{ox} and \ref{bergmanetal}(i) have analogues in the polynomial algebra $\C[x_1,x_2,\ldots,x_n]$ when $n>3$. In these $\{x_i,x_j\}=0$ for $1\leq i,j\leq n-1$ and $\ham z$ is a simple derivation of $\C[x_1,x_2,\ldots,x_{n-1}]$.
See \cite[\S3]{dajox} and \cite[\S9]{nowetal} for details from the point of view of Ore extensions.
\end{remark}

\begin{example}
Coutinho \cite{Cout2,Cout3} has used the theory of foliations to make a substantial contribution to the understanding of the simple derivations of $\C[x,y]$. Let $A_2$ be the subspace of $\C[x,y]$ consisting of polynomials of total degree at most $2$ and let $\mathcal{U}_2$ be the set of unimodular rows $(a,b)$
where $a, b\in A_2\times A_2$ are such that at least one of $a$ and $b$ has total degree $2$. In \cite{Cout3} it is shown that the closure $\overline{\mathcal{U}_2}$ in $A_2\times A_2$ has four irreducible components $\mathcal{P}_i$, $1\leq i\leq 4$ and examples of simple derivations from each component are given. For the first two types, the class of examples is dense in $\mathcal{P}_i$. Below we give the details of examples of four corresponding types of Poisson bracket on $B$ for which
$B$ is Poisson simple. Full details, presented from the point of view of derivations of $\C[x,y]$, can be found in \cite{Cout3}.

{\bf Type 1}, $\mathcal{P}_1$: let  $a,b,c\in \Q[i]\backslash{0}$, with $a\neq 1$ be such that the quadratic  polynomial $y^2+bx^2+cxy$ is irreducible over $\Q[i]$. Then, by \cite[Proposition 4.1]{Cout3} and Corollary~\ref{simple}, $\C[x,y,z]$ is Poisson simple for the Poisson bracket such that
\[\{x,y\}=0,\quad \{y,z\}=c(xy+a)+bx^2\text{ and }\{z,x\}=xy+a.\]

{\bf Type 2}, $\mathcal{P}_2$: let  $\beta\in \Q[i][x,y]$, be homogeneous of degree $2$ and irreducible over  $\Q[i]$. Then, by \cite[Proposition 5.4]{Cout3} and Corollary~\ref{simple}, $\C[x,y,z]$ is Poisson simple for the Poisson bracket such that
\[\{x,y\}=0,\quad \{y,z\}=-\beta\text{ and }\{z,x\}=1.\]

{\bf Type 3}, $\mathcal{P}_3$: by \cite[Proposition 6.1]{Cout3} and Corollary~\ref{simple}, $\C[x,y,z]$ is Poisson simple for the Poisson bracket such that
\[\{x,y\}=0,\quad \{y,z\}=-x\text{ and }\{z,x\}=xy+1.\]

{\bf Type 4}, $\mathcal{P}_4$: in Examples~\ref{bergmanetal}(i), take $p(x)=\rho x$ where $\rho\in \C\backslash\{0\}$.

For discussion of some classes of simple derivations $\delta=g\partial_x-f\partial_y$ of $A$ where the degrees of $f$ and $g$ may be greater than $2$, see \cite[Corollary 4.3, Theorems 4.4 and 5.5 and Proposition 6.2]{Cout2}.

\end{example}

\begin{example} Let $f=-1$ and $g=x$, so that $\delta(x)=x$ and $\delta(y)=1$ and
 the Poisson bracket on $A$ is such
that $\{y,z\}=-1, \{z,x\}=x$ and $\{x,y\}=0$. The Poisson triple
here is the cm-exact triple $x\grad(y-\log x)$. It is clear that
$xB$ is  a Poisson prime ideal and that $x\notin \Pz (B)$. Applying Proposition~\ref{shamsim} with $C=C[x^{\pm 1}]$, $a=0$, $b=1$ and $\delta|_C=xd/dx$, it is  easy  to see
that  $xA$ is the only non-zero $\delta$-prime ideal
of $A$. As $\delta(A)\nsubseteq xA$ it follows from Theorem~\ref{PcapA}
that $\Pspec(A)=\{0,xA\}$. By Theorem~\ref{homeo}, if $R=A[z;\delta]$
then $\spec R=\{0,xR\}$.
\end{example}

\begin{example}\label{new}
Let $M=xB+yB$ and $N=xA+yA$ and suppose that $f,g\in A$ are such
that if
$\delta=g\partial_x-f\partial_y$ then $N$ is the unique non-zero $\delta$-prime ideal of $A$, in other words, there are no height one prime ideals invariant under $\delta$ and $N$ is the only maximal ideal of $A$ invariant under $\delta$. Then $f=-\delta(y)\in N$, $g=\delta(x)\in N$ and $\delta(A)\subseteq N$. By
Theorems~\ref{PcapA} and \ref{homeo},
\[\Pspec B=\{0,xB+yB\}\cup \{xB+yB+(z-\alpha)B: \alpha\in \C\}\] and, if $R=C[z;\delta]$,
\[\spec R=\{0,xR+yR\}\cup \{xR+yR+(z-\alpha)R: \alpha\in \C\}.\]
Note that $\Pspec_2 B=\{0\}$ and $\spec_2 R=\{0\}$. In other words, there is no proper Poisson prime homomorphic image of $B$ with a non-zero Poisson bracket and every proper prime homomorphic image of $R$ is commutative. However if $j$ is such that $f\notin N^j$ or $g\notin N^j$ then $B/(N^jB)$ is a proper Poisson homomorphic image of $B$ with a non-zero Poisson bracket and $R/(N^jR)$ is a noncommutative proper homomorphic image of $R$. Such a $j$ must exist as $f$ and $g$ must be non-zero and $\cap_{j\geq 1}N^j=0$.

Goodearl and Warfield \cite[p. 61]{gwpaper1} specify such an example
with $f=-(x^2+y^2)$ and $g=x+y$. Although condition on the base field in
\cite{gwpaper1} is satisfied by $\R$ but not by $\C$, the conclusion
is also valid for $\C$. The details of this example were omitted from \cite{gwpaper1} as the proof was \lq exceedingly tedious\rq. Interest was expressed in any similar
example with a short proof. Here, subject to the reader's
interpretation of the word \lq short\rq, we present such an example.

Let $f=-x(1+xy)$ and $g=y$ so that,
$\delta(y)=x(1+xy)$ and $\delta(x)=y$. Let $^\prime$ denote differentiation with respect to $x$. Clearly $N$
is the unique maximal ideal of $A$ invariant under $\delta$. Let
$Q\neq N$ be a non-zero $\delta$-prime ideal of $A$.
Then $Q$ has height one and is principal, $Q=qA$, say, with $0\neq
q=\sum_{i=0}^n q_i(x)y^i$, each $q_i(x)\in \C[x]$, $q_n(x)\neq 0$
and, as $\delta(q_0(x))=yq_0^\prime(x)$, $n>0$. Let $h\in A$ be such that $\delta(q)=hq$. Note that, for
$0\leq i\leq n$,
\[\delta(q_i(x)y^i)=q_i^\prime(x)y^{i+1}+ix^2q_i(x)y^i+ixq_i(x)y^{i-1}.\]
Note also that $q_n^\prime(x)\in q_n(x)\C[x]$ whence $q_n^\prime(x)=0$ and $q_n(x)\in\C^*$.
Therefore $\deg_y(\delta(q))\leq n$ so $h=h(x)\in \C[x]$.
Comparing coefficients of $y^i$, $0\leq i\leq n$, in the equation $\delta(q(x))=h(x)q(x)$, we obtain
\begin{equation}\label{recurg}
(i+1)xq_{i+1}(x)=(h(x)-ix^2)q_i(x)-q_{i-1}^\prime(x),
\end{equation}
where $q_{-1}(x)=0=q_{n+1}(x)$. Note that $q_0(x)\neq 0$, otherwise
$q_i(x)=0$ for all $i$.
For $i\geq
0$, let $d_i=\deg(q_i(x))$, let $d=d_0$ and let $e_i=\deg (h(x)-ix^2)$.
By \eqref{recurg} with
$i=0$, $d_1=e_0+d_0-1$.
It follows from \eqref{recurg} that
\begin{equation}\label{growth}
\text{if } d_i+e_i>d_{i-1}-1 \text{ then }d_{i+1}=d_i+e_i-1.
 \end{equation}
In the following five situations, \eqref{growth} can be used to show, inductively, that the sequence $\{d_i\}$ is eventually strictly increasing. Hence these cases can be excluded.

(i) If $h(x)=0$ then $e_i=2$, when $i>0$,  $q_1(x)=0$ and $d_i=d-4+i$ whenever $i>1$.

(ii) If $h(x)$ has degree $r=0$ or $1$ then $e_0=r$, $e_i=2$ when $i>0$, $d_1=d+r-1$ and $d_i=d-r-2+i$ when $i>1$.

(iii) If $h(x)$ has degree $r\geq 3$ or $h(x)=ax^2+bx+c$ has degree $r=2$ and $a\notin \N$ then $d_i=d+i(r-1)$ for $i>0$.

(iv) If $h(x)=ax^2+bx+c$ has degree $2$, $a\in \N$ and $b\neq 0$
then $d_i=d+i$ for $0\leq i\leq a$, $d_{a+1}=d+a$ and $d_{a+j}=d+a+j-1$ for $j\geq 2$.

(v) If $h(x)=ax^2+c$ has degree $2$, $a\in \N$ and $c\neq 0$ then $d_i=d+i$ for $0\leq i\leq a$, $d_{a+1}=d+a-1$ and $d_{a+j}=d+a+j-2$ for $j\geq 2$.

This leaves only the case $h(x)=ax^2$, $a\in \N$, in which we need to keep track of leading coefficients as well as degrees.
Let $\alpha$ be the leading coefficient of $q_0(x)$. By repeated use of \eqref{recurg}, the leading
coefficient of $q_i(x)$ is $\bigl(
\begin{smallmatrix} a\\ i
\end{smallmatrix} \bigr)\alpha$ for $0\leq i\leq a$.  In particular, the leading
coefficients of $q_{a-1}(x)$ and $q_a(x)$ are $a\alpha$ and $\alpha$ respectively. By \eqref{growth}, $d_i=d+i$  for $0\leq i\leq a$.

By \eqref{recurg}, with $i=a$, $(a+1)xq_{a+1}(x)=-q_{a-1}^\prime(x)$ so
$d_{a+1}=d+a-3$ and the leading coefficient in $q_{a+1}(x)$ is
$-(d+a-1)a\alpha/(a+1)$.

From \eqref{recurg}, with $i=a+1$, we see that
$d_{a+2}\leq d+a-2$ and that the coefficient of $x^{d+a-2}$ in
$q_{a+2}(x)$ is $-(d+2a)\alpha/((a+1)(a+2))\neq 0$.
Therefore $d_{a+2}=d+a-2>d_{a+1}-e_{a+2}-1$. It now follows, inductively,
that $d_{a+j}=d+a+j-4$ for all $j\geq 3$, which is
impossible. This completes the proof that $\Pspec$ and $\spec R$
are as stated above.
\end{example}

\begin{example}
\label{GWJ}
 Here we consider the Poisson bracket on $B$ arising from
\cite[Example 2.15]{gwpaper1}, where
$\delta=2y\partial_x+(y^2+x)\partial_y$ so that $\{y,z\}=-(y^2+x)$,
$\{z,x\}=2y$ and $\{x,y\}=0$. In \cite{gwpaper1}, it is shown that the
only non-zero $\delta$-prime ideals of $A$ are the
maximal ideal $M:=xA+yA$ and the height one prime $Q:=(y^2+x+1)A$. Note that $Q\nsubseteq M$ and
that $\delta(A)\subseteq M$ but $\delta(A)\nsubseteq Q$. By Theorem~\ref{PcapA},
\[\Pspec B=\{0,(y^2+x+1)B,
xB+yB\}\cup \{xB+yB+(z-\alpha)B: \alpha\in \C\}.\] If $R=A[z;\delta]$ then
\[\spec R=\{0,
(y^2+x+1)R, xR+yR\}\cup \{xR+yR+(z-\alpha)R: \alpha\in \C\}.\]
Note that $\Pspec_2 B=\{0,(y^2+x+1)B\}$ and $\spec_2 R=\{0,(y^2+x+1)R\}$.
In contrast to Example~\ref{new}, there is a unique non-zero Poisson prime ideal that is not residually null.
\end{example}

\begin{remark}
In both Examples~\ref{new} and \ref{GWJ}, the Poisson algebra $B$ has a Poisson prime ideal $P=xB+yB$ which has height two as a prime ideal but is minimal as a non-zero Poisson  prime ideal. In both cases $P$ is residually null. To obtain examples of this phenomenon in which $P$ is not residually null, pass to $B^\prime=B[u,v]=\C[x,y,z,u,v]$ with the Poisson bracket such that $\{u,b\}=\{v,b\}=0$ for all $b\in B$ and $\{u,v\}=1$. This is the tensor product, as Poisson algebras, of $B$ and a copy of the coordinate ring of the symplectic plane. Then $xB^\prime+yB^\prime$ again has height two as a prime ideal and is minimal as a non-zero Poisson prime ideal but it is not residually null, having factor isomorphic to $\C[z,u,v]$ with $\{u,v\}=1$ and $\{u,z\}=\{v,z\}=0$.
\end{remark}

\end{document}